\documentclass[a4paper,11pt]{article}


\usepackage{soul}
\usepackage{geometry}
\usepackage[T1]{fontenc}
\usepackage[english]{babel}
\usepackage{amssymb,amsmath,amsfonts,amstext,amsthm} 
\usepackage{bbm} 
\usepackage{color,hyperref} 
\usepackage[numbers,sort&compress]{natbib} 
\usepackage{authblk} 
\usepackage{cleveref} 
\usepackage{todonotes} 
\usepackage{array} 
\usepackage{booktabs} 
\usepackage{adjustbox} 
\usepackage{multirow} 

\setlength{\bibsep}{0.0pt} 
\setlength{\heavyrulewidth}{1.5pt} 
\setlength{\abovetopsep}{4pt} 
\definecolor{darkblue}{rgb}{0.0,0.0,0.3} 
\hypersetup{colorlinks,breaklinks,linkcolor=darkblue,urlcolor=darkblue,anchorcolor=darkblue,citecolor=darkblue} 
\interfootnotelinepenalty=1000 


\newcommand{\panel}[1]{({\textbf{#1}})} 
\newcommand{\abs}[1]{\left|#1\right|} 
\newcommand{\E}{\mathbbm{E}} 
\newcommand{\R}{\mathbbm{R}} 
\newcommand{\K}{\tilde\kappa} 
\newcommand{\prob}{\operatorname{Pr}} 
\newcommand{\var}{\operatorname{var}} 
\newcommand{\ld}[1]{\log(\det {#1})} 
\newcommand{\Y}[2]{Y_{#2}(#1)} 
\newcommand{\p}{\mu} 
\newcommand{\cv}[1]{c_v(#1)} 

 %
\crefname{section}{Section}{Sections} 
\crefname{cor}{Corollary}{Corollaries}
\crefname{equation}{eq.\hspace{-0.05cm}}{eqs.\hspace{-0.05cm}} 
\newcommand{\eq}[1]{\Cref{#1}} 

\newtheorem{theorem}{Theorem}
\newtheorem{prop}{Proposition}
\newtheorem{cor}{Corollary}

\newtheorem{conj}{Conjecture}
\theoremstyle{remark}
\newtheorem{remark}{Remark}

\makeatletter
\renewenvironment{proof}[1][\proofname]{%
   \par\pushQED{\qed}\normalfont%
   \topsep6\p@\@plus6\p@\relax
   \trivlist\item[\hskip\labelsep\bfseries#1\@addpunct{.}]%
   \ignorespaces
}{%
   \popQED\endtrivlist\@endpefalse
}
\makeatother


\begin{document}
\title{Log-minor distributions and an application to estimating mean subsystem entropy}
\author[1]{Alice C. Schwarze}
\author[2]{Philip S. Chodrow}
\author[3]{Mason A. Porter}
\affil[1]{Wolfson Centre for Mathematical Biology, Mathematical Institute, University of Oxford, Oxford OX2 6GG, UK}
\affil[2]{Operations Research Center, Massachusetts Institute of Technology, Cambridge, MA, 02139}
\affil[3]{Department of Mathematics, University of California, Los Angeles, 520 Portola Plaza, Los Angeles, California 90095, USA}
\maketitle

\noindent AMS 2010 Subject classification: 15B99, 15A15, 60E15, 93A10\\
Keywords: empirical distributions, determinants, sampling error, positive-definite matrices, random matrices 


\section*{Abstract}

A common task in physics, information theory, and other fields is the analysis of properties of subsystems of a given system. Given the covariance matrix $M$ of a system of $n$ coupled variables, the covariance matrices of the subsystems are principal submatrices of $M$. The rapid growth with $n$ of the set of principal submatrices makes it impractical to exhaustively study each submatrix for even modestly-sized systems. It is therefore of great interest to derive methods for approximating the distributions of important submatrix properties for a given matrix. 

Motivated by the importance of differential entropy as a systemic measure of disorder, we study the distribution of log-determinants of principal $k\times k$ submatrices when the covariance matrix has bounded condition number. We derive upper bounds for the right tail and the variance of the distribution of minors, and we use these in turn to derive upper bounds on the standard error of the sample mean of subsystem entropy. Our results demonstrate that, despite the rapid growth of the set of subsystems with $n$, the number of samples that are needed to bound the sampling error is asymptotically independent of $n$. Instead, it is sufficient to increase the number of samples in linear proportion to $k$ to achieve a desired sampling accuracy. 


\section{Introduction}\label{sec:introduction}

In many fields of study, researchers use matrices to represent systems of interest. Statisticians and data scientists represent large tabular data sets as matrices \cite{Skiena2017}. In network science, it is common to use adjacency matrices to represent the structure of a network \cite{Newman2018}. In dynamical systems, researchers use Jacobian matrices in the study of the linearized dynamics of a system of coupled variables \cite{Strogatz2015}. For networks, dynamical systems, statistical analysis of large data sets, and other applications, it can be insightful (and even necessary) to examine their components (as subnetworks, subsystems, reduced data sets, and so on). Several researchers have used subsystem properties to characterize robustness and other salient properties of dynamical systems \cite{Tononi1994,Tononi1999,Lucia2005,Randles2011,Li2012,Li2016}. Network scientists count and analyze motifs and other subgraphs in networks to characterize a network's structure \cite{Shen-Orr2002,Milo2002}. Several prominent tools in data science are based on linear sketching, an approach to data dimensionality reduction whereby one obtains a reduced data set via matrix multiplication \cite{Woodruff2014,Francis2018} or as a linear combination of submatrices \cite{Sarlos2006,Clarkson2009,Kyrillidis2014}. An example of such a tool for dimensionality reduction is principal component analysis \cite{Liberty2013}. 

The various applications of submatrices motivate the mathematical study of their properties. In this paper, we study the distribution of log-determinants of principal submatrices of a positive definite matrix and show that our results lead to controllable sampling guarantees for computing the mean differential entropy of subsystems for a dynamical system. Researchers have studied the differential entropy of subsystems in areas such as physics \cite{Page1993,Sen1996}, biology \cite{Li2012,Li2016}, neuroscience \cite{Tononi1994,Tononi1999,Lucia2005}, computer science \cite{Randles2011}, and coding theory \cite{Koetter2003}.  For example, Tononi et al.\,(1999) computed a measure of network redundancy from the mean differential entropy of its subsystems of fixed size \cite{Tononi1999}. Teschendorff et al.\,(2014) \cite{Teschendorff2014} used differential entropy to define a measure of network robustness for protein-interaction networks .

For several symmetric multivariate distributions, estimates of differential entropy are affine functions of the log-determinant of a system's covariance matrix. Examples include the multivariate normal distribution \cite{Ahmed1989}, the multivariate $t$ distribution \cite{guerrero1996measure,Nadarajah2005}, and the multivariate Cauchy distribution \cite{Nadarajah2005}. For the $n$-variate normal distribution with covariance matrix $M$, for example, the differential entropy is \cite{Ahmed1989}
\begin{align}
    h(M) = \frac{1}{2} \log(\det M) + \frac{n}{2}\left(1 + \log (2\pi)\right)\,,  \label{eq:diff_ent}
\end{align}
where the base of the logarithm can be any finite positive number\footnote{The base $b$ of the logarithm determines the units of entropy. If one chooses $b=2$, one measures entropy values in bits. If one chooses $b=e$, one measures entropy values in nats.}.
The logarithm of the covariance matrix is thus sufficient to approximate the differential entropy of several multivariate distributions. 

The principal submatrices of $M$ are covariance matrices of subsystems that correspond to subsets of coupled variables. One can compute the differential entropy of a subsystem by computing $h$ in \Cref{eq:diff_ent} for a principal submatrix of $M$. A system of $n$ coupled variables possesses $\binom{n}{k}\approx \frac{n^k}{k!}$ subsystems of $k$ variables; each of these subsystems corresponds to one of the $\binom{n}{k}$ principal $k\times k$ submatrices of $M$. The exact computation of the distribution of differential subsystem entropy or its moments thus requires one to compute $O(n^k)$ distinct determinants, an infeasible task for large $n$ and $k$. This task can be computationally prohibitive even for modestly-sized systems. To our knowledge, the largest system for which researchers have exactly computed the differential entropy of subsystems is a synthetic network with $n=12$ variables and subsystems with $k\leq 12$ variables \cite{Tononi1999}. 

To address this problem, we study the distribution of log-determinants of principal $k\times k$ submatrices. We refer to these log-determinants as \textit{log-minors} of \textit{size} $k$. As we noted above, these log-determinants are sufficient to determine the subsystem entropy for many important multivariate distributions. Knowledge of the properties of this distribution thus enables the derivation of bounds on the sampling error when estimating subsystem entropy in many applications. We show that, given a bound on the condition number of $M$, the standard error of a sample mean of differential entropy is independent of $n$ and sublinear in $k$, implying that one needs a sublinear number of samples in $k$ to ensure a desired accuracy.

Our paper proceeds as follows. In \Cref{sec:notation}, we introduce some notation that we use throughout this paper. In \Cref{sec:results}, we give several upper bounds on the tail and variance of the distribution of log-minors of a positive-definite matrix with bounded condition number. We present proofs for these bounds in \Cref{sec:proofs} and show numerical examples in \Cref{sec:examples}. In \Cref{sec:sampling}, we apply our theorems to provide probabilistic guarantees on the sample mean and relative error, and we discuss implications for the design of practical schemes for estimating mean subsystem entropy. We conclude and discuss possible extensions in \Cref{sec:conclusions}. 


\section{Notation}\label{sec:notation}

Let $M \in \R^{n\times n}$ be a positive-definite matrix. Let $\lambda_1(M)\geq\lambda_2(M)\cdots\geq\lambda_n(M)\geq 0$ be the eigenvalues of $M$. Because $M$ is positive definite, it is also nonsingular; its condition number is $\kappa(M) = \lambda_1(M)/\lambda_n(M)$. For a given index set $I\in[n]^k := \{1, \ldots, n\}^k$, the matrix $M_{I} := [M_{i,j}]_{i,j\in I}$ is the corresponding principal submatrix of $M$. For any fixed $k\leq n$, let $\mathcal{A}_k(M)$ denote the set of all such $k\times k$ submatrices of $M$, and let $A_k(M)$ denote a uniformly-random element of this set. We define a random variable $\Y{M}{k} := \log(\det A_k(M))$ and denote its empirical distribution by $\p_{M,k}$. For convenience, we define $\wedge_{n,k}:=\min\{k,n-k\}$.


\section{Bounds on the distribution of log-minors}\label{sec:results}

In this section, we state bounds on the distribution $\p_{M,k}$ for a positive-definite matrix $M$ with bounded condition number. We give upper bounds for the distribution's support, variance, and right tail. We also show that we can improve these bounds if $M$ is diagonal.

\begin{theorem}[Tail and variance bound for log-minors of a positive-definite matrix]
Let $M$ be a positive-definite $n\times n$ matrix with condition number $\kappa(M)\leq\K$. For every $r\geq 0$, we have
\begin{equation}
    \prob\left(|\Y{M}{k} -\mathbbm E [\Y{M}{k}]|\geq r\right)\leq 3\exp \left(-\frac{r}{\log\K}\sqrt{\frac{n}{k(n-k)}}\right)\,.\label{eq:ldi}
\end{equation}
Furthermore, the variance of $\Y{M}{k}$ satisfies  
\begin{equation}
    \var(\Y{M}{k})\leq 6\left(\frac{k(n-k)}{n}\right)(\log\K)^2\,.
    \label{eq:ldi_varbound}
\end{equation}
\label{th:ldi}\end{theorem}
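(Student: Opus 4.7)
The proof has two ingredients: a Lipschitz property of $I \mapsto \log\det M_I$ under single-element swaps, and a concentration inequality for such Lipschitz functions of uniformly random $k$-subsets.

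\textbf{Step 1: single-swap Lipschitz bound.} Let $I, I'$ be $k$-subsets of $[n]$ with $I \setminus I' = \{a\}$ and $I' \setminus I = \{b\}$, and set $J = I \cap I'$. The Schur-complement identity for block determinants gives
\[
\log\det M_I - \log\det M_{I'} = \log(M/M_J)_{aa} - \log(M/M_J)_{bb}.
\]
Since $M/M_J = [(M^{-1})_{J^c}]^{-1}$, Cauchy interlacing applied to $M^{-1}$ shows that $M/M_J$ has eigenvalues in $[\lambda_n(M), \lambda_1(M)]$; Rayleigh quotients then place its diagonal entries in the same interval. Therefore
\[
|\log\det M_I - \log\det M_{I'}| \leq \log\lambda_1(M) - \log\lambda_n(M) = \log\kappa(M) \leq \log\K.
\]

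\textbf{Step 2: concentration for the random subset.} Couple $I$ with a uniform permutation $\sigma \in S_n$ via $I = \{\sigma(1), \ldots, \sigma(k)\}$, and view $\Y{M}{k}$ as a function $f(\sigma)$. Step 1 implies $|f(\sigma) - f(\sigma \circ (i\,j))| \leq \log\K$ for every transposition $(i\,j)$. A Doob martingale over $\sigma(1), \ldots, \sigma(k)$ combined with Azuma--Hoeffding yields a sub-Gaussian bound with variance proxy $k(\log\K)^2$. Jacobi's complementary-minor identity $\det M_I = \det M \cdot \det(M^{-1})_{I^c}$, combined with the invariance $\kappa(M^{-1}) = \kappa(M)$, implies that $\Y{M}{k}$ and $\Y{M^{-1}}{n-k}$ differ only by the deterministic constant $\log\det M$, so $k$ may be replaced by $\wedge_{n,k}$. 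The elementary inequality $\wedge_{n,k} \leq 2k(n-k)/n$ then recasts the bound in terms of the finite-population scale $\sigma = (\log\K)\sqrt{k(n-k)/n}$, and completing the square converts the sub-Gaussian form into the claimed subexponential form $3\exp(-r/\sigma)$.

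\textbf{Step 3: variance via integration.} Inserting \Cref{eq:ldi} into the identity $\var(\Y{M}{k}) = \int_0^\infty 2r\, \prob(|\Y{M}{k} - \E[\Y{M}{k}]| \geq r)\, dr$ yields $\var(\Y{M}{k}) \leq \int_0^\infty 6r\, e^{-r/\sigma}\, dr = 6\sigma^2$, which is \Cref{eq:ldi_varbound}.

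\textbf{Main obstacle.} The delicate part is securing the finite-population correction $k(n-k)/n$ rather than the naive $k$ delivered by Azuma--Hoeffding on $k$ nontrivial martingale increments. The symmetry between $\Y{M}{k}$ and $\Y{M^{-1}}{n-k}$ effectively halves $k$ to $\min(k, n-k)$, which is within a constant factor of the desired scale. A cleaner but technically heavier route proceeds via the Poincar\'e inequality for the Bernoulli--Laplace random walk on the slice, whose spectral gap of order $n/(k(n-k))$ produces the optimal scaling directly without invoking the symmetry trick. Managing constants so that the prefactor of \Cref{eq:ldi} is exactly $3$ is then a bookkeeping exercise rather than a conceptual one.
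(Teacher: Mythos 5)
Your Steps 1 and 3 are sound: the Schur-complement/Jacobi derivation of the single-swap Lipschitz constant $\log\K$ is a legitimate alternative to the paper's argument (which embeds the two swapped submatrices in a common $(k+1)\times(k+1)$ principal submatrix and applies Cauchy interlacing, \Cref{prop:cauchy}), and your tail-integration identity for the variance is exactly what the paper uses. The gap is in Step 2. A Doob martingale plus Azuma--Hoeffding with the variance proxy you state, $v=k(\log\K)^2$ --- or $v=\wedge_{n,k}(\log\K)^2$ after your (correct) complementary-minor symmetry --- yields a bound of the form $2\exp(-r^2/(2v))$ with $v\geq\sigma^2:=\tfrac{k(n-k)}{n}(\log\K)^2$ (and $v=2\sigma^2$ when $k=n/2$). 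Such a sub-Gaussian bound does not imply \Cref{eq:ldi} pointwise: writing $t=r/\sigma$ and $v=c\sigma^2$, the desired implication $2e^{-t^2/(2c)}\leq 3e^{-t}$ for all $t\geq 0$ requires $\sup_{t\geq 0}\bigl(t-t^2/(2c)\bigr)=c/2\leq\log(3/2)\approx 0.405$, i.e.\ $c\leq 0.81$, whereas you have $c\geq 1$. So ``completing the square'' fails precisely in the moderate-deviation range $r\asymp\sigma$; this is not a bookkeeping issue, because in that range the sub-Gaussian bound you have is strictly weaker than the claimed subexponential one.

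The route you relegate to a closing remark is the one that actually delivers the theorem, and it is the paper's route: view $\Y{M}{k}$ as a function on $\mathcal S_n$ under the random-transposition walk, whose spectral gap is $2/n$ (\Cref{prop:gap}); observe that only a fraction $2k(n-k)/n^2$ of transpositions can change the log-minor, so that the squared outlier deviation $|||f|||_\infty^2$ is controlled by $k(n-k)(\log\K)^2/n^2$ after rescaling; and then apply Ledoux's exponential concentration inequality for functions on a chain with a spectral gap (\Cref{prop:ledoux}), which outputs the subexponential form $3e^{-cr}$ with the $\sqrt{n/(k(n-k))}$ scale directly, with no sub-Gaussian-to-subexponential conversion needed. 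To complete your argument you would need either to carry out that Poincar\'e computation or to sharpen the martingale step to a variance proxy of at most $2\log(3/2)\,\sigma^2$, which Azuma with increment bound $\log\K$ per revealed coordinate does not provide.
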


\begin{remark} 
The tail bound in \Cref{eq:ldi} does not guarantee that $\Y{M}{k}$ concentrates\footnote{Ledoux defined concentration of measure in Ref.\,\cite{Ledoux2001} (on page 3) as follows. Let $(X,d)$ be a metric space with probability measure $\mu$ on Borel sets of $(X,d)$. The \textit{concentration function} is defined as $\alpha_{(X,d,\mu)}:=\sup\{1-\mu(A_r);A\subset X, \mu(A)\geq\frac{1}{2}\}$, where $r>0$ and $A_r:=\{x\in X;d(x,A)<r\}$ is the open $r$-neighborhood of $A$. The measure $\mu$ has \textit{normal concentration} on $(X,d)$ if there are constants $c$ and $C$ such that $\alpha_{(X,d,\mu)}\leq C\exp(-cr^2)$ for every $r$.} on $\mathcal A_k(M)$. This is because the bound in \Cref{eq:ldi} is increasing with respect to $k$ and asymptotically constant with respect to $n$. Indeed, for the bound to approach $0$ for a sequence $\{M_i\}$ of matrices, it is both necessary and sufficient that $\lim_{i\rightarrow \infty} \sqrt{k_i}\log \kappa_i(M) \rightarrow 0$. Because $k$ cannot be smaller than $1$, this condition requires the condition number $\kappa_i(M)$ to approach $1$. The condition $\lim_{i\rightarrow\infty}\kappa_i(M)\rightarrow 1$ severely constrains the sequence $\{M_i\}$. In that limit, all eigenvalues of $M$ are equal to each other and all log-minors are equal to $k\log\lambda_1$.
\end{remark}

\begin{theorem}[Support and variance bound for log-minors of a positive-definite matrix] 
Let $M$ be a positive-definite $n\times n$ matrix with condition number $\kappa(M)\leq\K$. For any $k$, the random variable $\Y{M}{k}$ and its distribution $\p_{M,k}$ satisfy the following properties:
\begin{enumerate}
    \item The distribution $\p_{M,k}$ has bounded support that is contained in an interval whose length is no greater than $(\wedge_{n,k}\times\log\K)$; and
    \item the variance of $\Y{M}{k}$ satisfies 
    \begin{equation}
        \var(\Y{M}{k})\leq \frac{1}{4}\left(\wedge_{n,k}\times \log \K\right)^2\,.
        \label{eq:support_varbound}
    \end{equation}
\end{enumerate}
\label{th:support}\end{theorem}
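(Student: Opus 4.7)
The plan is to bound the support of $\p_{M,k}$ in two complementary ways---one via direct application of Cauchy interlacing to principal submatrices, the other via a Schur-complement identity---and then invoke Popoviciu's inequality to convert the support bound into a variance bound.

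For the first support bound, every principal submatrix $M_I$ is positive definite, so by Cauchy interlacing each eigenvalue $\lambda_j(M_I)$ lies in $[\lambda_n(M), \lambda_1(M)]$. Summing logarithms yields
\begin{equation*}
\Y{M}{k} = \sum_{j=1}^k \log \lambda_j(M_I) \in \bigl[k \log \lambda_n(M),\ k \log \lambda_1(M)\bigr],
\end{equation*}
so the support is contained in an interval of length at most $k \log \kappa(M) \leq k \log \K$. For the complementary $(n-k) \log \K$ bound, I would use the Schur-complement identity $\det M = \det M_I \cdot \det(M/M_I)$, in which $M/M_I$ is an $(n-k) \times (n-k)$ positive-definite matrix. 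The key subclaim is that the eigenvalues of $M/M_I$ also lie in $[\lambda_n(M), \lambda_1(M)]$. The upper bound follows because $M/M_I = M_{I^c, I^c} - M_{I^c, I} M_I^{-1} M_{I, I^c} \preceq M_{I^c, I^c}$, combined with Cauchy interlacing applied to $M_{I^c, I^c}$. The lower bound uses the standard block-inverse identity $(M/M_I)^{-1} = (M^{-1})_{I^c, I^c}$, which by Cauchy interlacing applied to $M^{-1}$ gives $\lambda_{\max}((M/M_I)^{-1}) \leq \lambda_{\max}(M^{-1}) = 1/\lambda_n(M)$.

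Consequently, $\log \det(M/M_I)$ has range at most $(n-k) \log \K$ as $I$ varies, and because $\log \det M$ depends only on $M$, the same is true for $\Y{M}{k} = \log \det M - \log \det(M/M_I)$. Taking the smaller of the two support bounds establishes part 1, and applying Popoviciu's inequality---which guarantees that any random variable supported in an interval of length $L$ has variance at most $L^2/4$---with $L = \wedge_{n,k} \log \K$ immediately yields \Cref{eq:support_varbound}. The main obstacle is verifying the eigenvalue bounds on the Schur complement, in particular the lower bound via the inverse-submatrix identity; the remainder is bookkeeping around Cauchy interlacing and a standard range-to-variance inequality.
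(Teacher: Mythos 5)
Your proof is correct, and its first half --- the $k\log\K$ support bound via repeated Cauchy interlacing, followed by Popoviciu's inequality --- is exactly the paper's argument. You diverge in how you obtain the complementary $(n-k)\log\K$ bound. The paper notes that for $k>n/2$ any two size-$k$ index sets overlap in at least $2k-n$ indices, so one submatrix can be reached from the other by at most $n-k$ single row--column exchanges, each of which (by the interlacing estimate $b_2$ from the proof of Theorem~\ref{th:ldi}) changes the log-determinant by at most $\log\K$. You instead use the Schur-complement factorization $\det M=\det M_I\cdot\det(M/M_I)$ together with the block-inverse identity $(M/M_I)^{-1}=(M^{-1})_{I^c,I^c}$, and your two eigenvalue subclaims both check out: $M/M_I\preceq M_{I^c,I^c}$ because the subtracted term $M_{I^c,I}M_I^{-1}M_{I,I^c}$ is positive semidefinite, and $\lambda_{\min}(M/M_I)\geq\lambda_n(M)$ because $(M^{-1})_{I^c,I^c}$ is a principal submatrix of $M^{-1}$, whose largest eigenvalue is $1/\lambda_n(M)$. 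Hence $\log\det(M/M_I)$, and therefore $\Y{M}{k}=\log\det M-\log\det(M/M_I)$, ranges over an interval of length at most $(n-k)\log\K$. Your route is somewhat more work but is also more rigorous than the paper's brief ``refinement'' step, and it exposes the underlying duality $\Y{M}{k}=\log\det M+\log\det\bigl((M^{-1})_{I^c,I^c}\bigr)$ between $k$-minors of $M$ and $(n-k)$-minors of $M^{-1}$ (which has the same condition number), explaining the $k\mapsto n-k$ symmetry of $\wedge_{n,k}$.
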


\begin{remark} 
The variance bound in \Cref{eq:support_varbound} is much sharper than the one in \Cref{eq:ldi_varbound}. Both variance bounds are asymptotically constant with respect to $n$. For fixed $k$, the two variance bounds differ by a factor of $24$ in the large-$n$ limit.
\end{remark}

\begin{remark} 
For even $n$ and $k\in\{1,n-1\}$, the bound on the variance in \eq{eq:support_varbound} is sharp when $M$ is a $2\ell\times 2\ell$ (where $\ell=n/2$) diagonal matrix with entries $\lambda_1 = \cdots = \lambda_\ell = \K$ and $\lambda_{\ell+1} = \cdots = \lambda_{2\ell}=1$. 
\end{remark}

When $M$ is diagonal, we can derive a variance bound that is sharper than the bounds in \Cref{eq:ldi_varbound,eq:support_varbound}.

\begin{theorem}[Variance bound for log-minors of a positive-definite diagonal matrix]
Let $D$ be a positive-definite $n\times n$ diagonal matrix with condition number $\kappa(D)\leq\K$. The variance of $\Y{D}{k}$ satisfies  
\begin{equation}
    \var(\Y{D}{k})\leq\frac{k}{4}\left(\frac{n-k}{n-1}\right)(\log\K)^2\,.
    \label{eq:diag_varbound}
\end{equation}
\label{th:diag}\end{theorem}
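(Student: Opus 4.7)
The plan is to exploit the fact that when $D$ is diagonal, the determinant of every principal $k\times k$ submatrix factorizes. Writing $\ell_i := \log D_{ii}$ for $i\in[n]$, the random variable becomes
\begin{equation}
    \Y{D}{k} = \log\det A_k(D) = \sum_{i\in I}\ell_i\,,
\end{equation}
where $I$ is a uniformly-random $k$-element subset of $[n]$. This reduces the problem from a statement about determinants to a statement about simple random sampling (without replacement) from the finite population $\{\ell_1,\ldots,\ell_n\}\subset\R$.

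Next I would invoke the standard finite-population variance formula: if $I$ is uniform over $k$-subsets of $[n]$ and $\sigma^2=\frac{1}{n}\sum_{i=1}^n(\ell_i-\bar\ell)^2$ denotes the population variance (with $\bar\ell=\frac{1}{n}\sum_i\ell_i$), then
\begin{equation}
    \var\!\left(\sum_{i\in I}\ell_i\right)=k\cdot\frac{n-k}{n-1}\cdot\sigma^2\,.
\end{equation}
This identity is routine and follows from computing $\E[\mathbbm{1}\{i\in I\}]=k/n$ and $\E[\mathbbm{1}\{i,j\in I\}]=\frac{k(k-1)}{n(n-1)}$ for $i\neq j$, then summing. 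The factor $\frac{n-k}{n-1}$ is precisely the finite-population correction that appears in the target bound.

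To finish, I must bound the population variance $\sigma^2$. Since $\kappa(D)\leq\K$, the ratio of the largest to smallest diagonal entry is at most $\K$, so all $\ell_i$ lie in a real interval of length at most $\log\K$. Popoviciu's inequality on variances then yields $\sigma^2\leq (\log\K)^2/4$, with equality when half the $\ell_i$ sit at each endpoint. Substituting back produces the claimed bound
\begin{equation}
    \var(\Y{D}{k})\leq k\cdot\frac{n-k}{n-1}\cdot\frac{(\log\K)^2}{4}\,.
\end{equation}

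There is really no serious obstacle here; the main thing to verify carefully is just the finite-population variance identity and the invocation of Popoviciu's inequality. If one prefers not to cite Popoviciu, the inequality $\sigma^2\leq L^2/4$ for values in an interval of length $L$ can be proved in a line by noting that replacing each $\ell_i$ with the endpoint it is closer to can only increase the sum of squared deviations from the midpoint, which is the variance-minimizing center.
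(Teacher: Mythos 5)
Your proof is correct, and it takes a genuinely different route from the paper's. The paper treats $\var(\Y{D}{k})$ as a function $v(x_1,\ldots,x_n)$ of the log-eigenvalues, argues that $v$ is convex, concludes that its maximum over the hypercube $[0,\log\kappa(D)]^n$ is attained at a vertex, and then recognizes the vertex configuration as a scaled hypergeometric random variable whose variance it maximizes over the number $\ell$ of coordinates equal to $\log\kappa(D)$. You instead compute the variance exactly for an \emph{arbitrary} diagonal $D$ via the finite-population (sampling-without-replacement) identity $\var\bigl(\sum_{i\in I}\ell_i\bigr)=k\,\frac{n-k}{n-1}\,\sigma^2$, and then bound the population variance $\sigma^2$ by Popoviciu's inequality, which the paper already invokes for Theorem 2 and which applies here since all $\ell_i$ lie in an interval of length $\log\kappa(D)\le\log\K$. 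Your argument is shorter, avoids the paper's somewhat informally justified convexity-and-vertices step, and isolates exactly where the slack is: the only inequality used is $\sigma^2\le(\log\K)^2/4$, with equality precisely when half the log-eigenvalues sit at each endpoint, which immediately recovers the paper's sharpness remark for even $n$. What the paper's extremal argument buys in exchange is an explicit description of the maximizing matrix for every $n$, including the odd case $\ell^*=(n\pm1)/2$, which your route would need a short extra computation to extract.
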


\begin{remark} 
The two variance bounds in \Cref{eq:support_varbound,eq:diag_varbound} are asymptotically constant with respect to $n$ and converge to the same limiting value of $k(\log\K)^2/4$. 
\end{remark}

\begin{remark} 
The variance bound for diagonal positive-definite matrices in \Cref{eq:diag_varbound} is sharper than the variance bound for general positive-definite matrices in \Cref{eq:support_varbound}. The former differs from the latter by a factor of $\max\{k,n-k\}/(n-1)\leq 1$. 
\end{remark}

\begin{remark} 
For even $n$ and any $k\leq n$, the bound on the variance in \eq{eq:diag_varbound} is sharp when $M$ is a $2\ell\times 2\ell$ (where $\ell=n/2$) diagonal matrix with entries $\lambda_1 = \cdots = \lambda_\ell = \K$ and $\lambda_{\ell+1} = \cdots = \lambda_{2\ell}=1$. The sharpness of the bound for diagonal matrices indicates a limit to possible improvements for the variance bound for general positive-definite matrices. Specifically, one cannot hope to improve the variance bound in \Cref{eq:support_varbound} by more than a factor of $\max\{k,n-k\}/(n-1)$.
\end{remark}
   
The variance bound in \Cref{th:support} is sharp for a diagonal matrix. This observation and several examples in \Cref{sec:examples} motivate the following conjecture. 

\begin{conj}[Diagonal matrices maximize log-minor variance] Let $\mathcal M_{\kappa}$ be the set of positive-definite $n\times n$ matrices with condition number $\kappa$. For all $k<n$, $\kappa$, and $M\in\mathcal M_{\kappa}$, there exists a diagonal matrix $D\in\mathcal M_{\kappa}$ such that
\begin{equation}
    \var(\Y{M}{k})
        \leq 
    \var(\Y{D}{k})\,.
\label{eq:ineq}
\end{equation}
    \label{conj:diag}
\end{conj}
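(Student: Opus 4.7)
The natural first candidate $D = \Lambda := \operatorname{diag}(\lambda_1,\ldots,\lambda_n)$ belongs to $\mathcal M_\kappa$, but does not always satisfy \eq{eq:ineq}. Consider for example $M = U\operatorname{diag}(\kappa,1,1,1)U^T$ where $U$ is the block-orthogonal matrix with first two rows $(1,\pm 1,0,0)/\sqrt{2}$ and last two rows $e_3$, $e_4$. A direct calculation yields $\var(\Y{M}{1}) = \frac{1}{4}(\log((\kappa+1)/2))^2$ and $\var(\Y{\Lambda}{1}) = \frac{3}{16}(\log\kappa)^2$, and the former exceeds the latter once $\kappa$ is sufficiently large. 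I would therefore take $D$ to be the \emph{bimodal} diagonal matrix with $\ell := \lfloor n/2\rfloor$ entries equal to $\kappa$ and the remaining $n-\ell$ entries equal to $1$. Then $D \in \mathcal M_\kappa$ automatically, and because $\log\det D_I = j\log\kappa$ with $j$ hypergeometric on $(n,\ell,k)$, one obtains
\begin{equation*}
    \var(\Y{D}{k}) = \frac{k\,\ell(n-\ell)(n-k)}{n^2(n-1)}(\log\kappa)^2,
\end{equation*}
which (for even $n$ with $\ell = n/2$) coincides with the diagonal bound in \eq{eq:diag_varbound}.

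With this choice of $D$, \Cref{conj:diag} reduces to proving the tightened inequality $\var(\Y{M}{k}) \le \frac{k\,\ell(n-\ell)(n-k)}{n^2(n-1)}(\log\kappa)^2$ for every $M\in\mathcal M_\kappa$, which for $k\ge 2$ is strictly sharper than \eq{eq:support_varbound}. The natural starting point is Cauchy--Binet: writing $M = U\Lambda U^T$, one obtains $\det M_I = \sum_{|J|=k} c_{IJ} \prod_{j\in J}\lambda_j$ with $c_{IJ} := (\det U_{I,J})^2$, and the matrix $C = (c_{IJ})$ is doubly stochastic by applying Cauchy--Binet once more to $UU^T = I$. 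However, a purely doubly-stochastic reduction is insufficient: the inequality $\var(\log(Ce^Z)) \le \var Z$ can fail for generic doubly stochastic $C$ (the counterexample above corresponds to precisely such a $C$). The proof must therefore exploit the extra structure that $C$ is the entry-wise square of the orthogonal matrix $\wedge^k U$ on $\wedge^k \R^n$, a constraint not captured by double stochasticity alone.

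A plausible strategy proceeds in two stages. First, reduce to spectra of the form $\{1,\kappa\}$ (with appropriate multiplicities) via a ``spreading'' deformation that pushes each intermediate eigenvalue toward one of the two endpoints while keeping $\kappa(M)$ fixed, arguing that this operation is variance-non-decreasing. Second, in the two-eigenvalue case $\Lambda = \operatorname{diag}(\kappa I_\ell, I_{n-\ell})$, parameterize the orbit $\{U\Lambda U^T : U \in O(n)\}$ by coset representatives of $O(\ell) \times O(n-\ell)$ and show that $\var(\Y{M}{k})$ along this orbit is maximized at the permutation matrices, i.e.\ at the bimodal $D$. The principal obstacle is the first stage---establishing ``spreading monotonicity''. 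Since $\log\det M_I$ depends nonlinearly on both the eigenvalues and the orthogonal factor, one would need a calculus-of-variations computation along eigenvalue deformations, or a new majorization-type inequality adapted to the compound-matrix structure, beyond what Cauchy--Binet alone provides. Numerical examples in \Cref{sec:examples} support this monotonicity, but its rigorous verification is the principal technical challenge.
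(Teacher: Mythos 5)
This statement is posed in the paper as a conjecture and is not proved there, so there is no proof of record to compare against; I can only assess your proposal on its own terms. It is not a proof: both stages of the strategy you outline are left unestablished, as you acknowledge. That said, the parts you do carry out are correct and genuinely clarifying. The computation showing that the ``same-spectrum'' diagonal matrix $\Lambda=\operatorname{diag}(\lambda_1,\ldots,\lambda_n)$ can have strictly smaller log-minor variance than $M$ is right (in your block example, $\var(\Y{M}{1})=\tfrac{1}{4}\bigl(\log((\kappa+1)/2)\bigr)^2$ versus $\var(\Y{\Lambda}{1})=\tfrac{3}{16}(\log\kappa)^2$, and the former is larger once $\kappa$ is large), and it correctly forces the witness $D$ to be the bimodal matrix rather than the diagonalization of $M$ --- a sharpening of the conjecture worth recording. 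Your hypergeometric variance formula for the bimodal $D$ is correct and, for even $n$, recovers the right-hand side of \eq{eq:diag_varbound}, so you have correctly reduced the conjecture (for even $n$) to the quantitative claim $\var(\Y{M}{k})\leq\frac{k(n-k)}{4(n-1)}(\log\kappa)^2$ for all $M\in\mathcal M_{\kappa}$. The Cauchy--Binet and double-stochasticity observations are likewise sound, as is the warning that double stochasticity alone cannot close the argument.

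The gap is that neither stage of your two-stage strategy rises above a plausibility claim. For the first stage, you give no argument that pushing intermediate eigenvalues toward the endpoints $\{1,\kappa\}$ (with the orthogonal factor held fixed) is variance-non-decreasing; since $\log\det M_I=\log\sum_{J}c_{IJ}\prod_{j\in J}\lambda_j$ is a log-sum-exp in the variables $\log\lambda_j$ whose variance over $I$ need not respond monotonically to spreading the spectrum, this is not a routine convexity argument, and it is not even clear the claim holds without simultaneously re-optimizing over $U$. For the second stage, the assertion that the variance over the orbit $\{U\Lambda U^T : U\in O(n)\}$ is maximized at permutation matrices is precisely the hard content of the conjecture in the two-eigenvalue case, and no mechanism for proving it is offered. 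The proposal is therefore best read as a useful reformulation and strengthening of the conjecture, together with a correct identification of why the obvious approaches fail, rather than as a proof or a substantial step toward one.
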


The variance bounds (see \,\Cref{eq:support_varbound,eq:ldi_varbound,eq:diag_varbound}) have important implications for the accuracy of sample means of log-minors. We discuss these implications in \Cref{sec:sampling}.


\section{Proofs of bounds on the distribution of log-minors}\label{sec:proofs}

\subsection{Proof of \Cref{th:ldi}}\label{sec:proof2}

To prove \Cref{th:ldi}, we use Cauchy's interlacing theorem and results on Markov chains on countable sets. Chatterjee and Ledoux (2009) previously used this approach to prove a concentration result for empirical cumulative eigenvalue spectra of Hermitian matrices \cite{Chatterjee2009}.

\begin{prop}[Cauchy's interlacing theorem \cite{Hwang2004}]\label{prop:cauchy} 
Let $M$ be a Hermitian $n\times n$ matrix, and let $A$ be a principal $(n-1)\times (n-1)$ submatrix of $M$. 
If $M$ has eigenvalues $\lambda_1(M)\geq\lambda_2(M)\geq\dots\geq\lambda_n(M)$ and $A$ has eigenvalues $\lambda_1(A)\geq\lambda_2(A)\geq\dots\geq\lambda_{n-1}(A)$, then
\begin{equation}
    \lambda_1(M)\geq\lambda_1(A)\geq\lambda_2(M)\geq\lambda_2(A)\geq\lambda_3(M)\geq\dots\geq\lambda_{n-1}(M)\geq\lambda_{n-1}(A)\geq\lambda_n(M)\,.\nonumber
\end{equation}
\end{prop}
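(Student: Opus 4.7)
The plan is to prove Cauchy's interlacing theorem by means of the Courant--Fischer variational characterization of eigenvalues of Hermitian matrices. The key structural observation is that a principal $(n-1)\times(n-1)$ submatrix of $M$ is nothing other than the restriction of the quadratic form $x\mapsto x^*Mx$ to a coordinate hyperplane of $\mathbb{C}^n$. Once the problem is recast in this language, the interlacing inequalities follow from the fact that maxima and minima over subspaces can only move in a controlled direction when the ambient space is enlarged or shrunk by one dimension.

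First I would reduce to a convenient normal form. Since permutation similarity preserves eigenvalues and sends principal submatrices to principal submatrices, I may assume without loss of generality that $A$ is obtained from $M$ by deleting the last row and column. Define the isometric inclusion $P\colon \mathbb{C}^{n-1}\to\mathbb{C}^n$ by $Py=(y_1,\dots,y_{n-1},0)^\top$; this satisfies $\|Py\|=\|y\|$ and $(Py)^*M(Py)=y^*Ay$ for every $y\in\mathbb{C}^{n-1}$. Next I would record the two dual Courant--Fischer formulas that I will use:
\begin{equation}
    \lambda_k(M)=\max_{\substack{S\subset\mathbb{C}^n\\ \dim S=k}}\ \min_{\substack{x\in S\\ \|x\|=1}} x^*Mx
    =\min_{\substack{T\subset\mathbb{C}^n\\ \dim T=n-k+1}}\ \max_{\substack{x\in T\\ \|x\|=1}} x^*Mx\,, \nonumber
\end{equation}
and analogously for $A$ with $n-1$ in place of $n$.

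For the upper interlacing bound $\lambda_k(A)\le\lambda_k(M)$, I would take a $k$-dimensional subspace $S'\subset\mathbb{C}^{n-1}$ that attains the outer maximum in the formula for $\lambda_k(A)$. Its image $PS'$ is a $k$-dimensional subspace of $\mathbb{C}^n$ on which the Rayleigh quotient of $M$ coincides with that of $A$ on $S'$, so applying the max--min formula to $M$ with the candidate subspace $PS'$ yields $\lambda_k(A)\le\lambda_k(M)$. For the lower interlacing bound $\lambda_{k+1}(M)\le\lambda_k(A)$, I would instead use the min--max formula: pick an $(n-k)$-dimensional subspace $T'\subset\mathbb{C}^{n-1}$ that attains the outer minimum for $\lambda_k(A)$, so that $PT'$ is an $(n-k)$-dimensional subspace of $\mathbb{C}^n$, and $(n-k)=n-(k+1)+1$ is exactly the dimension prescribed by the min--max representation of $\lambda_{k+1}(M)$. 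Using $PT'$ as a candidate gives $\lambda_{k+1}(M)\le\lambda_k(A)$.

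Applying these two inequalities for every $k\in\{1,\dots,n-1\}$ and chaining them together produces the full interlacing
\begin{equation}
    \lambda_1(M)\ge\lambda_1(A)\ge\lambda_2(M)\ge\lambda_2(A)\ge\cdots\ge\lambda_{n-1}(A)\ge\lambda_n(M)\,.\nonumber
\end{equation}
There is no real obstacle here beyond bookkeeping: the one point that needs care is matching dimensions between $\mathbb{C}^{n-1}$ and $\mathbb{C}^n$ when transferring subspaces through $P$, which is why I would explicitly record both formulations of Courant--Fischer rather than relying on only one. The remainder of the argument is a clean application of those formulas.
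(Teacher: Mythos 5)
Your proof is correct: the reduction by permutation similarity, the isometric inclusion $P$, and the two dual Courant--Fischer formulas are used with the right dimension counts ($k$-dimensional subspaces for $\lambda_k(A)\le\lambda_k(M)$, and $(n-k)$-dimensional subspaces, matching $n-(k+1)+1$, for $\lambda_{k+1}(M)\le\lambda_k(A)$), and chaining over $k=1,\dots,n-1$ yields the stated interlacing. Note, however, that the paper itself does not prove this proposition at all --- it imports it as a known result with a citation to Hwang (2004) and only uses it as a tool in the proofs of Theorems 1 and 2 --- so there is no in-paper argument to compare against. Your variational (min--max) argument is the standard textbook proof; the cited reference instead gives an elementary proof based on the characteristic polynomial of the bordered matrix and an intermediate-value-theorem sign-change argument, which avoids the variational characterization but requires handling the degenerate cases of repeated eigenvalues separately. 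Either route establishes the proposition; yours has the advantage of extending immediately to deleting $m$ rows and columns (giving $\lambda_{k+m}(M)\le\lambda_k(A)\le\lambda_k(M)$), which is in the spirit of how the paper repeatedly applies the one-row version.
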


\begin{prop}[Large-deviation inequality for functions on countable sets \cite{Ledoux2001} (page 50)]\label{prop:ledoux}
Let $(\Pi,\mu_{\mathcal A})$ be a reversible Markov chain on a finite or countable set $\mathcal A$. Let $(\Pi,\mu_{\mathcal A})$ have a spectral gap\footnote{The spectral gap (also called the ``Poincar\'e constant'') of a Markov chain $(\Pi,\mu_{\mathcal A})$ on a space $\mathcal A$ is the constant $\overline{\lambda}$ such that, for all functions $f$, we have $\overline{\lambda}\times\var_\mu(f)\leq\frac{1}{2}\sum_{A,B\in\mathcal A}[f(A)-f(B)]^2\Pi(A,B)\mu(\{A\})$\,. See, for example, Ref.\,\cite{Ledoux2001} (page 50).} of $\overline{\lambda}>0$. It follows that , whenever $f: \mathcal A\rightarrow\mathbbm R$ is a function such that 
\begin{equation}
    |||f|||^2_\infty:=\frac{1}{2}\sup_{A\in \mathcal A}\sum_{B\in \mathcal A} \left(f(A)-f(B)\right)^2\Pi(A,B)\leq 1\,,\nonumber
\end{equation} 
 it is also true that $f$ is integrable with respect to $\mu_{\mathcal A}$ and that, for every $r\geq 0$, the probability measure 
\begin{equation}
    \mu\left(\{f\geq \int_{\mathcal A} f\,d\mu_{\mathcal A} + r\}\right)\leq 3e^{-\frac{r}{2}\sqrt{\overline{\lambda}}}\,. \nonumber
\end{equation}
\end{prop}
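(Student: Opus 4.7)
My plan is to follow the Aida--Masuda--Stroock (Herbst-style) iteration scheme: the spectral-gap hypothesis is equivalent to the Poincar\'e inequality $\overline{\lambda}\,\var_{\mu_{\mathcal A}}(g)\leq \mathcal{E}(g,g)$, where $\mathcal{E}(g,g)=\frac{1}{2}\sum_{A,B}(g(A)-g(B))^2\Pi(A,B)\mu_{\mathcal A}(\{A\})$ is the Dirichlet form. I would apply this inequality to the test function $g=e^{sf/2}$ to convert the spectral-gap inequality into a functional recursion relating the Laplace transform $Z(s):=\int e^{sf}\,d\mu_{\mathcal A}$ to $Z(s/2)^2$. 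Iterating the recursion would then produce an exponential-moment bound, and Chernoff's inequality $\mu(\{f\geq \int f\,d\mu_{\mathcal A}+r\})\leq e^{-sr}Z(s)$ would yield the stated tail bound after optimizing in $s$.

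After centering so that $\int f\,d\mu_{\mathcal A}=0$ (using integrability, which I justify at the end by truncation), the key analytic step is controlling $\mathcal{E}(e^{sf/2},e^{sf/2})$ in terms of $Z(s)$. The mean-value theorem gives $(e^{sa/2}-e^{sb/2})^2 \leq \tfrac{s^2}{4}(a-b)^2\,\max(e^{sa},e^{sb}) \leq \tfrac{s^2}{4}(a-b)^2(e^{sa}+e^{sb})$. Substituting into $\mathcal{E}(e^{sf/2},e^{sf/2})$ and using reversibility of $(\Pi,\mu_{\mathcal A})$ to symmetrize the cross-term $(e^{sf(A)}+e^{sf(B)})\Pi(A,B)\mu_{\mathcal A}(\{A\})$ into $2e^{sf(A)}\Pi(A,B)\mu_{\mathcal A}(\{A\})$, followed by the hypothesis $|||f|||^2_\infty\leq 1$, one obtains
\begin{equation}
\mathcal{E}(e^{sf/2},e^{sf/2})\leq \tfrac{s^2}{2}\,Z(s)\,. \nonumber
\end{equation}
Combined with $\var_{\mu_{\mathcal A}}(e^{sf/2})=Z(s)-Z(s/2)^2$ and the Poincar\'e inequality, this yields the recursion $Z(s)\leq \frac{\overline{\lambda}}{\overline{\lambda}-s^2/2}\,Z(s/2)^2$, valid for $0\leq s<\sqrt{2\overline{\lambda}}$.

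Iterating the recursion with $s_n=s/2^n$ and using $Z(0)=1$ together with $Z(s_n)=1+O(s_n^2)$ (because $f$ is centered) yields, after a convergent infinite product, a bound of the form $Z(s)\leq \exp(\alpha(s))$ for all $s$ in a neighborhood of $0$. Taking $s$ to be a fixed fraction of $\sqrt{\overline{\lambda}}$ (the natural choice is $s=\tfrac{1}{2}\sqrt{\overline{\lambda}}$, so that the Chernoff exponent matches $\tfrac{r}{2}\sqrt{\overline{\lambda}}$) and feeding the resulting constant bound on $Z(s)$ into Chernoff's inequality produces the advertised bound $3\exp(-\tfrac{r}{2}\sqrt{\overline{\lambda}})$.

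The main obstacle I anticipate is the numerical bookkeeping needed to extract precisely the constants $3$ and $\tfrac{1}{2}$: a naive iteration easily yields \emph{some} sub-exponential tail with rate proportional to $\sqrt{\overline{\lambda}}$, but the stated constants require a careful estimate of the infinite product $\prod_{n\geq 0}\bigl(\overline{\lambda}/(\overline{\lambda}-s_n^2/2)\bigr)^{2^n}$ and of the error terms from centering. A secondary but genuine technical point is justifying that $e^{sf}$ is $\mu_{\mathcal A}$-integrable in the first place; I would handle this by applying the whole argument to bounded truncations $f_N:=(-N)\vee(f\wedge N)$, obtaining uniform-in-$N$ Laplace bounds, and then passing to the limit via Fatou/monotone convergence.
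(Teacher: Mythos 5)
This proposition is not proved in the paper at all: it is quoted verbatim as an external result from Ledoux \cite{Ledoux2001} (page 50), so there is no in-paper argument to compare against. Your sketch reproduces the standard proof of that cited result --- the Aida--Stroock/Herbst iteration: apply the Poincar\'e inequality to $e^{sf/2}$, bound the Dirichlet form via the mean-value estimate and reversibility by $\tfrac{s^2}{2}Z(s)$ using $|||f|||_\infty^2\leq 1$, iterate the recursion $Z(s)\leq\bigl(1-\tfrac{s^2}{2\overline{\lambda}}\bigr)^{-1}Z(s/2)^2$, and finish with Chernoff --- and each of these steps checks out. The one obstacle you flag, extracting the constants $3$ and $\tfrac12$, is in fact not an obstacle: with $s=\tfrac12\sqrt{\overline{\lambda}}$ the infinite product $\prod_{n\geq 0}\bigl(1-\tfrac{1}{8\cdot 4^{n}}\bigr)^{-2^{n}}$ is roughly $1.3<3$, so the stated bound follows with room to spare; and your truncation device for integrability is legitimate since truncation does not increase $|||f|||_\infty$.
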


\begin{remark} 
The expected squared distance in $f$ between $A\in\mathcal A$ and its adjacent states in the Markov chain is $\sum_{B\in\mathcal A}\left(f(A)-f(B)\right)^2\Pi(A,B)$. One can thus think of $|||f|||_{\infty}^2$ as a measure of the expected squared distance between the greatest ``outlier'' $A$ and adjacent states in $\mathcal A$. We thus refer to $|||f|||_{\infty}^2$ as the \textit{squared outlier deviation} of $f$ on $(\Pi,\mu_{\mathcal A})$.
\end{remark}

\begin{prop}[Spectral gap of random-transposition walk \cite{Diaconis1981}]\label{prop:gap}
Let $\mathcal S_n$ be the set of permutations of $n$ elements, and let $s\in\mathcal S_n$. Let the ``random-transposition walk'' be a reversible Markov chain $(\Pi, \mu_{\mathcal S_n})$ with kernel
\begin{align}\Pi=\left\{\begin{matrix}
    1/n\,,&\textrm{if } s'=s\,,\\
2/n^2\,,&\textrm{if } s'=\tau s \textrm{ for some transposition }\tau\,,\\
0\,,&\textrm{otherwise}\,.
    \end{matrix}\right. \nonumber
\end{align}
The random-transposition walk has a spectral gap of $\overline \lambda=2/n$.
\end{prop}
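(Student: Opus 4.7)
The plan is to recognize this kernel as a lazy random walk on the group $\mathcal S_n$ driven by a class function, and then to diagonalize its transition operator using Fourier analysis on the symmetric group. First I would rewrite $\Pi = \frac{1}{n}I + \frac{n-1}{n}T$, where $T(s,s') = \frac{2}{n(n-1)}$ whenever $s' = \tau s$ for some transposition $\tau$ and is $0$ otherwise; this is the transition kernel of the non-lazy walk that left-multiplies by a uniformly-random transposition. Reversibility of $\Pi$ with respect to the uniform measure $\mu_{\mathcal S_n}$ is immediate because the identity and all transpositions are involutions, so the driving measure is invariant under inversion.

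Next I would diagonalize $T$ using the representation theory of $\mathcal S_n$. Because the driving measure is constant on conjugacy classes, the convolution operator $T$ acts as a scalar on each isotypic component of $L^2(\mathcal S_n)$. The scalar corresponding to a partition $\lambda \vdash n$ equals the normalized character $\chi_\lambda(\tau)/\dim V_\lambda$ evaluated at any transposition $\tau$, and a classical formula of Frobenius expresses this ratio as the content sum $\sum_{(i,j)\in \lambda}(j-i)$ of the Young diagram of $\lambda$, divided by $\binom{n}{2}$. The trivial partition $(n)$ yields eigenvalue $1$; to locate the second-largest eigenvalue I would maximize the content sum over $\lambda \neq (n)$, and a short combinatorial check identifies the maximizer as the hook $(n-1,1)$, with content sum $n(n-3)/2$ and corresponding eigenvalue $(n-3)/(n-1)$ for $T$.

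Combining these computations, the second-largest eigenvalue of $\Pi$ is $\frac{1}{n} + \frac{n-1}{n}\cdot\frac{n-3}{n-1} = \frac{n-2}{n}$. For a reversible Markov chain, the Poincar\'e constant equals one minus the second-largest eigenvalue of the transition operator acting on $L^2(\mu)$, so $\overline{\lambda} = 1 - (n-2)/n = 2/n$, as claimed. The main obstacle in a self-contained write-up would be verifying that the hook $(n-1,1)$ dominates every other non-trivial partition (for instance $(n-2,2)$ and $(n-2,1,1)$); this amounts to a case analysis of content sums, handled most cleanly by noting that any Young diagram obtained from $(n-1,1)$ by moving an additional cell off the first row must place that cell at a position of strictly smaller content. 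Since \Cref{prop:gap} is stated with citation to Ref.\,\cite{Diaconis1981}, in practice I would invoke that reference rather than reprove the maximization from scratch.
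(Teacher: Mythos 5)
Your proposal is correct. The paper offers no proof of this proposition at all---it is stated purely as a citation to Diaconis and Shahshahani (1981)---so there is nothing to compare against except the cited reference itself, and your sketch is precisely the standard argument from that reference. The decomposition $\Pi = \tfrac{1}{n}I + \tfrac{n-1}{n}T$, the Frobenius content-sum formula for the normalized character at a transposition, the identification of $(n-1,1)$ as the maximizer (content sum $n(n-3)/2$, eigenvalue $(n-3)/(n-1)$ for $T$, hence $(n-2)/n$ for $\Pi$), and the identification of the Poincar\'e constant with one minus the second-largest eigenvalue all check out, giving $\overline{\lambda} = 2/n$ as claimed; your stated intention to cite rather than reprove the maximization over partitions is exactly what the paper does.
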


\begin{proof}[Proof of \Cref{th:ldi}]
Every principal $k\times k$ submatrix of $M$ is the top-left principal $k\times k$ submatrix of $M$ after a permutation of its rows and columns. We denote the permutated matrix by $s M$ and its top-left principal $k\times k$ submatrix by $\hat A_k(s M)$, where $s\in \mathcal S_n$ is a permutation of $n$ elements.

For the top-left principal $k\times k$ submatrix, only the first $k$ elements of $s$ are relevant. There are $(n-k)!$ permutations $s\in \mathcal S_n$ that are identical in their first $k$ elements, so there is a $1$-to-$(n-k)!$ correspondence between $\mathcal A_k(M)$ and $\mathcal S_n$. Because of the correspondence between $\mathcal A_k(M)$ and $\mathcal S_n$, we obtain the same distribution for a function $f(A_k(M))$, where we choose $A_k(M)$ uniformly at random from $\mathcal A_k(M)$, and for $f(\hat A_k(sM))$, where we choose $s$ uniformly at random from $\mathcal S_n$. 

Let $f_{\alpha}: \mathcal S_n \rightarrow \mathbbm R$ be such that
\begin{equation} \label{eq:fdef}
    f_{\alpha}(s) := \alpha\ld{\hat A_k(sM)}
\end{equation}
for some $\alpha\in\mathbbm R$. To find an upper bound on the squared outlier deviation for $f_{\alpha}$ on the random-transposition walk, we make two observations:
\begin{enumerate}
\item Consider two permutations, $s$ and $s'$, that are adjacent in the random-transposition walk; that is, $s'=\tau s$ for some transposition $\tau$. The determinant is invariant under basis transformation, so the value of $f_{\alpha}(s')$ can differ from $f_{\alpha}(s)$ only if $\tau$ is a transposition that swaps one of the first $k$ elements in $s$ with one of the last $n-k$ elements in $s$. There are $n^2$ possible transpositions for a sequence of $n$ elements; and $2k(n-k)$ of these transpositions swap one of the first $k$ elements of the sequence with one of the last $n-k$ elements of the sequence. Consequently, the fraction of transpositions that change the value of $f_{\alpha}$ has an upper bound of 
\begin{equation}
    b_1:= \frac{2k}{n^2}(n-k)\,.\nonumber
\end{equation}
\item Using Cauchy's interlacing theorem (see \Cref{prop:cauchy}), one can find an upper bound $b_2$ for $|f_{\alpha}(A)-f_{\alpha}(B)|$. For any $k<n$ and any pair $A, B\in\mathcal A_k(M)$, there exists a matrix $C\in A_{k+1}(M)$ such that $A$ and $B$ are principal submatrices of $C$. Cauchy's interlacing theorem implies that
\begin{enumerate}
    \item $\lambda_1(M)$ is an upper bound on the largest eigenvalue of $C$;
    \item $\lambda_n(M)$ is a lower bound on the smallest eigenvalue of $C$;
    \item $\sum_{i=1}^{k}\log\lambda_i(C)$ is an upper bound on $\ld{A}$ and $\ld B$; and
    \item $\sum_{i=2}^{k+1}\log\lambda_i(C)$ is a lower bound on $\ld{A}$ and $\ld B$.
\end{enumerate}
Therefore,
\begin{align*}
    |\ld{A}-\ld B|\leq & \sum_{i=1}^{k}\log\lambda_i(C) - \sum_{i=2}^{k+1}\log\lambda_i(C)\\
    \leq & \log\lambda_1(M) - \log\lambda_n(M) \leq \log \K\,.
\end{align*}
This upper bound for $|\ld{A}-\ld B|$ holds for arbitrary $A,B\in\mathcal A_k(M)$. We can thus set the upper bound to be $b_2=\alpha\log\K$.
\end{enumerate}
We obtain an upper bound for the squared outlier deviation of $f_{\alpha}$ of
\begin{align*}
    |||f_{\alpha}|||_\infty^2 &\leq \frac{1}{2}b_1b_2^2\\
&=k(n-k)\left(\frac{\alpha\log\K}{n}\right)^2\,.
\end{align*}

Let $\alpha'=n\log\K/\sqrt{k(n-k)}$. The function $f_{\alpha'}$ on $(\Pi,\mu_{\mathcal S_n})$ has a squared outlier deviation of $|||f_{\alpha'}|||^2_\infty\leq 1$. We can thus use the tail bound for functions on countable sets (see \Cref{prop:ledoux}) for $f_{\alpha'}$. Therefore,
\begin{equation*}
    \prob(|f_{\alpha'}(\hat A_k(\sigma M)) -\mathbbm E[f_{\alpha'}(A_k(\sigma M)]|\geq \alpha'r)\leq 3e^{-\frac{\alpha'r}{2}\sqrt{\overline{\lambda}}}\,.
\end{equation*}
We can substitute $f_{\alpha'}(\hat A_k(\sigma M))$ in \eq{eq:fdef} by $\alpha'\Y{M}{k}$, because of the correspondence between $\mathcal A_k(M)$ and $\mathcal S_n$. Applying \Cref{prop:gap}, we obtain
\begin{align*}
    \prob(|\alpha'\Y{M}{k} -\alpha'\mathbbm E [\Y{M}{k}]\geq \alpha'r)&\leq 3e^{-\frac{\alpha'r}{\sqrt{2n}}}\\
    \implies\prob(|\Y{M}{k} -\mathbbm E[\Y{M}{k}]|\geq r)&\leq 3\exp \left(-\frac{r}{\log\K}\sqrt{\frac{n}{k(n-k)}}\right)\,.
\end{align*}
This proves the first statement of \Cref{th:ldi} (see \eq{eq:ldi}).

We derive a bound on the variance of $\ld{A}$ from \Cref{eq:ldi} from a direct calculation. First, we write 
\begin{align*}
    \var(\Y{M}{k})  &= \mathbbm{E}[(\Y{M}{k}-\mathbbm{E}[\Y{M}{k}])^2]\\
    &= \int_{0}^\infty \prob(\Y{M}{k}-\mathbbm{E}[\Y{M}{k}])^2 \geq u)du\\
    &= \int_{0}^\infty \prob(\Y{M}{k}-\mathbbm{E}[\Y{M}{k}])\geq \sqrt u\} du\,.
\end{align*}
Using the tail bound in \eq{eq:ldi}, it follows that
\begin{align*}
    \var(\Y{M}{k})  &\leq \int_{0}^\infty3\exp \left(-\frac{\sqrt{u}}{\log\K}\sqrt{\frac{n}{k(n-k)}}\right) du\\
    & = 6\left(\frac{k(n-k)}{n}\right)(\log\K)^2
\end{align*}
\end{proof}

\subsection{Proof of \Cref{th:support}}\label{sec:proof1}

We prove \Cref{th:support} using Cauchy's interlacing theorem and Popoviciu's inequality.
\begin{prop}[Popoviciu's inequality \cite{Popoviciu1935, Sharma2010}]\label{prop:popoviciu}
Let $X$ be a real-valued random variable supported on the interval $[x_\mathrm{min}, x_{\mathrm{max}}]$. It then follows that $X$ has variance
\begin{equation}
    \var(X)\leq (x_{\operatorname{max}}-x_{\operatorname{min}})^2/4\,.\nonumber
\end{equation}
\end{prop}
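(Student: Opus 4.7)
The plan is to prove Popoviciu's inequality by reducing the variance of $X$ to an expectation against the midpoint of the supporting interval, and then bounding that expectation using the fact that $X$ cannot stray farther than $(x_{\max}-x_{\min})/2$ from the midpoint.

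The starting observation is the well-known variational characterization of variance: for any real constant $c$, one has $\E[(X-c)^2] = \var(X) + (\E[X]-c)^2$, so that $\var(X) \leq \E[(X-c)^2]$ with equality precisely at $c = \E[X]$. This follows from expanding $(X-c)^2 = (X - \E[X] + \E[X] - c)^2$ and noting that the cross term $2(X-\E[X])(\E[X]-c)$ has mean zero. The inequality $\var(X) \leq \E[(X-c)^2]$ is the workhorse: I am free to choose $c$ to be any convenient point in $\mathbbm{R}$, and the cleverness is in choosing $c$ well.

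I would then pick $c := (x_{\min}+x_{\max})/2$, the midpoint of the support. Since $X$ takes values almost surely in $[x_{\min},x_{\max}]$, the distance $|X - c|$ is almost surely bounded by $(x_{\max}-x_{\min})/2$, and therefore $(X-c)^2 \leq (x_{\max}-x_{\min})^2/4$ holds pointwise (almost surely). Taking expectations preserves the inequality and eliminates the random variable entirely, yielding $\E[(X-c)^2] \leq (x_{\max}-x_{\min})^2/4$. Chaining this with the variational bound from the previous paragraph gives $\var(X) \leq (x_{\max}-x_{\min})^2/4$, which is exactly the claim.

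There is essentially no obstacle here: the proof is a two-line argument once one recognizes the variational characterization of variance. The only subtlety worth flagging is making the choice of $c$ at the midpoint feel principled rather than ad hoc; this is justified by the observation that the midpoint minimizes the worst-case value of $|X-c|$ over all $X$ supported in $[x_{\min},x_{\max}]$, so it is the optimal choice for the bounding step even though it is not in general the optimal choice (which would be $\E[X]$) for the variational step. Equality is attained when $X$ is a symmetric two-point distribution on $\{x_{\min},x_{\max}\}$ with equal weights, which can be noted as a remark but is not needed for the statement itself.
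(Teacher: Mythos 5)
Your argument is correct: the variational bound $\var(X)\leq\E[(X-c)^2]$ combined with the midpoint choice $c=(x_{\min}+x_{\max})/2$ is exactly the standard proof of Popoviciu's inequality, and it is the same argument given in the reference the paper cites for this proposition (the paper itself only cites a proof rather than presenting one). Nothing further is needed.
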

For a proof of this version of Popoviciu's inequality, see Ref.\,\cite{stackPopoviciu}.

\begin{proof}[Proof of \Cref{th:support}] 
For any finite $n$ and $k$, the set $\mathcal A_k(M)$ of principal $k\times k$ submatrices of an $n\times n$ matrix $M$ has finite cardinality $\binom{n}{k}$. It follows that the distribution of any function of $A_k(M)$ has finite support. We define an interval $[r_1,r_2]$ with $r_1:=\min\Y{M}{k}$ and $r_2:=\max\Y{M}{k}$, such that the support of $\p_{M,k}$ is a finite subset of $[r_1,r_2]$.

We can obtain any principal $k\times k$ submatrix $A$ of $M$ by removing $n-k$ row--column pairs from $M$. Successive applications of Cauchy's interlacing theorem show that $\lambda_1(A)\leq\lambda_1(M)$ and $\lambda_k(A)\geq\lambda_n(M)$. It follows that
\begin{align}
    r_1=k\log\lambda_n(M)\,, \qquad
    r_2=k\log\lambda_1(M)\,.\nonumber
\end{align}
Therefore,
\begin{align}
    r_2-r_1\leq k\left(\log\K+\log\lambda_n(M)\right)-k\log\lambda_n(M)=k\log \K\,.\nonumber
\end{align}
If $k>n/2$, any two principal $k\times k$ submatrices share $2k-n$ rows and columns. They can thus differ in at most $n-k$ rows and columns. It follows that one can refine the lower and upper bounds on the support of $\p_{M,k}$ so that $[r_1,r_2]\leq\wedge_{n,k}\times\log\K$. We have thus proven the first part of Theorem 1. Applying Popiviciu's inequality to $X=\Y{M}{k}$ with $x_{\operatorname{max}}-x_{\operatorname{min}}\leq r_2-r_1$ yields the variance bound in \Cref{th:support}.
\end{proof}
 
\subsection{Proof of \Cref{th:diag}}\label{sec:proof3}
For our proof of \Cref{th:diag}, we maximize $\var(\Y{D}{k})$ with respect to the eigenvalues of $D$.

\begin{proof}[Proof of \Cref{th:diag}]
Let $D$ be a positive-definite diagonal matrix with entries $\lambda_1(D)\geq\lambda_2(D)\geq\dots\geq\lambda_n(D)>0$. 
Define $x_i = \log \lambda_i(D)$ for each $i \in [n]$; and let $I \in [n]^k$. It then follows that
\begin{align}
    \ld{D_{I}} &= \sum_{i \in I}x_{i}\,.\label{eq:xi}
\end{align}
We now consider the function $v(x_1,x_2,\ldots,x_n) := \var(\Y{D}{k})$. From \Cref{eq:xi}, we see that every value of $\Y{D}{k}$ is a sum of a subset of the variables $x_i$. Therefore, the function $v(x_1,x_2,\ldots,x_n)$ is convex (i.e., concave up) in the variables $x_1,x_2,\ldots,x_n$. Furthermore, the variance is translation-invariant. We may therefore, without loss of generality, suppose that $x_n = 0$ (corresponding to $\lambda_n(D) = 1$) and $x_1 = \log\kappa(D)$ (corresponding to $\lambda_n(D) =\kappa(D)$). Consequently, the maximization of the variance $\var(\Y{D}{k})$ amounts to the maximization of $v$ over the volume $[0,\log \kappa(D)]^n$ associated with an $n$-dimensional hypercube with edge length $\log \kappa(D)$. The solutions lie at the vertices of this hypercube. Therefore,
\begin{align*}
    x_i = \begin{cases}
        \log \kappa(D)\,, &\quad i \leq \ell \,, \\ 
        0 \,, &\quad \text{otherwise}\,,
    \end{cases}
\end{align*}
for some $\ell \in [n]$. We may now view $\Y{D}{k}/(\log \kappa(D))$ as a hypergeometric random variable on a population of size $n$ for which $\ell$ elements have the value $1$ and $n-l$ elements have the value $0$. The variance of this hypergeometric random variable is
\begin{align*}
    \var(\Y{D}{k}) = \frac{k(n-k)}{n^2(n-1)}\left(n-\ell\right)\ell (\log \kappa(D))^2\,,
\end{align*}
which is maximal at
\begin{align*}
    \ell^* = 
    \begin{cases}
        \frac{n}{2} \,, &\quad n \text{ even}\,, \\ 
        \frac{n\pm1}{2}\,, &\quad n\text{ odd}\,.
    \end{cases}
\end{align*}
The maximal value of $\var(\Y{D}{k})$ for even $n$ leads to the variance bound
\begin{align} \label{eq:vbd}
    \var (\Y{D}{k}) &\leq \frac{k(n-k)}{4(n-1)}(\log \kappa(D))^2 \\
    &\leq\frac{k(n-k)}{4(n-1)}(\log \K)^2\,. 
\end{align}
Comparing the maximal values of $\var(\Y{D}{k})$ for even $n$ and for odd $n$ shows that \Cref{eq:vbd} is a variance bound for all $n$.
\end{proof}


\section{Examples}\label{sec:examples}

In this section, we compare the tail of the distribution $\p_{M,k}$ for several example matrices $M$ to the bounds in Theorems \ref{th:ldi} and \ref{th:diag}.

We consider four examples of positive-definite $n\times n$ matrices with $n=20$ and fixed condition number $\kappa=3$. \paragraph{Example E1.} Consider the diagonal matrix $M_{\textrm{E1}}$ that maximizes the variance of $\Y{M_{\textrm{E1}}}{k}$.  (See the proof of \Cref{th:diag}.) For even $n$, this matrix has eigenvalues $\lambda_1, \lambda_2, \ldots, \lambda_n$, where $\lambda_1,\ldots,\lambda_{n/2}=\K$ and $\lambda_{n/2+1},\ldots,\lambda_n=1$. 

\paragraph{Example E2.} Consider a diagonal matrix $M_{\textrm{E2}}$ with eigenvalues $\lambda_1, \lambda_2, \ldots, \lambda_n$. We set $\lambda_1=\kappa$ and $\lambda_n=1$. We draw $\lambda_2,\ldots,\lambda_{n-1}$ from a uniform distribution on $[1,\kappa]$.

\paragraph{Example E3.} We obtain a non-diagonal positive-definite matrix $M_{\textrm{E3}}$ with condition number $\kappa$ via an orthogonal transformation of $M_{\textrm{E1}}$. That is,
\begin{equation}
    M_{\textrm{E3}} := Q^{-1}M_{\textrm{E1}}Q\,,\nonumber
\end{equation}
where $Q$ is an orthogonal matrix that we choose from the Haar measure over the group of orthogonal matrices. We use Stewart's algorithm \cite{Stewart1980} to generate $Q$.

\paragraph{Example E4.} We again generate a random orthogonal matrix $Q$ using Stewart's algorithm. We obtain another non-diagonal positive-definite matrix $M_{\textrm{E4}}:=Q^{-1}M_{\textrm{E2}}Q$ via an orthogonal transformation of $M_{\textrm{E2}}$.\vspace{1em}

\begin{figure}[p!tb]
\centering
\includegraphics[trim={0cm 1.5cm 0cm 2.75cm},clip,width=1\textwidth]{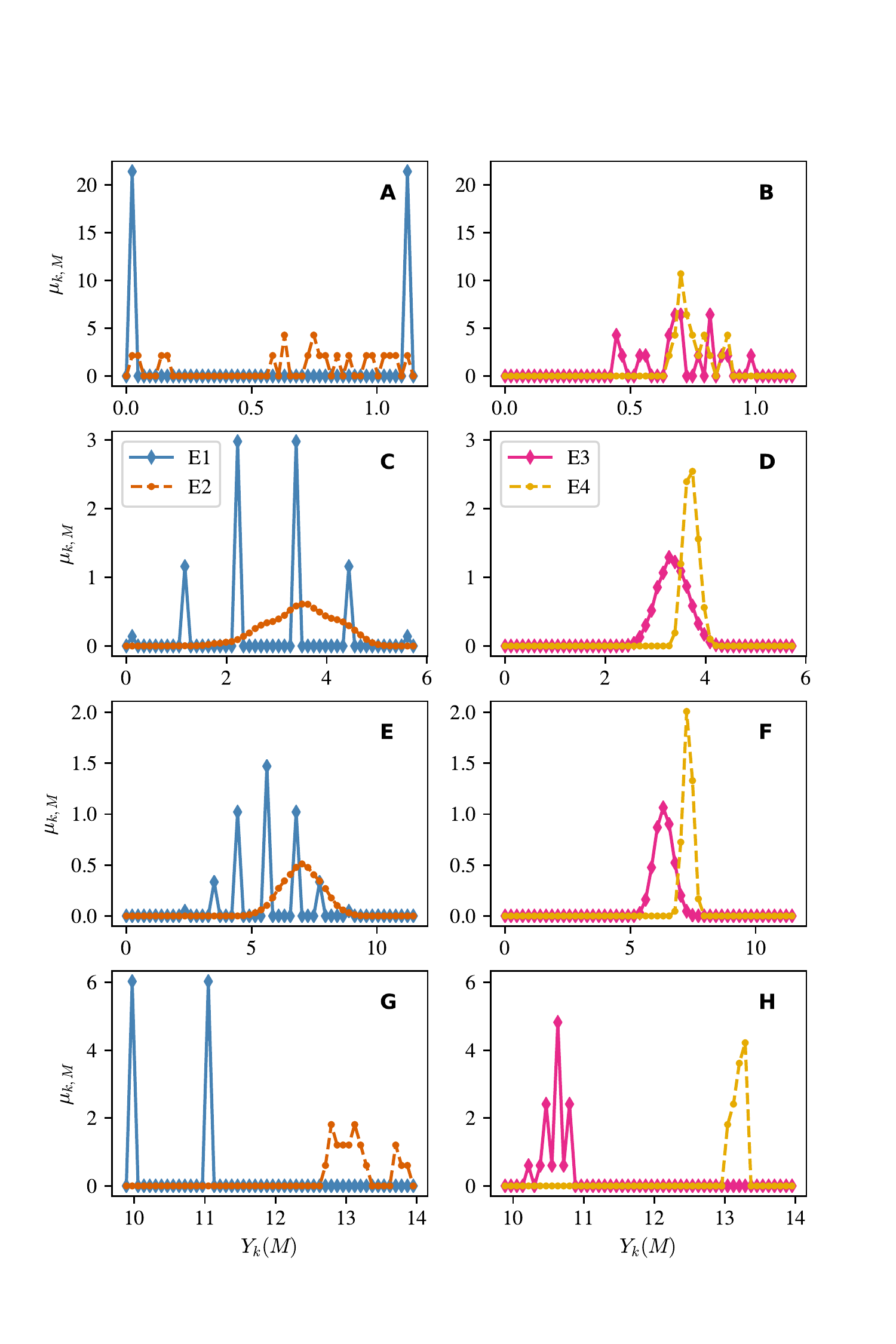}
\caption{Approximate probability density functions for log-minors. We show $\p_{M,k}$ for Examples E1, E2, E3, and E4 with $n=20$ and condition number $\kappa=3$. We show results for $k=1$ in panels \panel{A} and \panel{B}, for $k=5$ in panels \panel{C} and \panel{D}, for $k=10$ in panels \panel{E} and \panel{F}, and for $k=19$ in panels \panel{G} and \panel{H}.}
\label{fig:pdfs}
\end{figure}

In \Cref{fig:pdfs}, we show the empirical probability densities of $\Y{M}{k}$ for Examples E1, E2, E3, and E4 using four different values of $k$. For all four examples, we observe that the interval on which $\p_{M,k}$ is supported shifts to the right for progressively larger $k$. The length of the supported interval increases with $\wedge_{n,k}$. For $k=5$ and $k=10$ --- the cases in which $\wedge_{n,k}$ is larger than $1$ --- the distribution $\p_{M,k}$ are almost symmetric about $\E[\Y{M}{k}]$ for all four examples. For Example E1, the distribution $\p_{M,k}$ is symmetric about its mean for all examined values of $k$. Its density is nonzero at $\wedge_{n,k}+1$ equidistant points. 

\begin{table}[h!]
\begin{center}
\begin{tabular}{ccccccc}
    \hline\rule{0pt}{2.6ex}\rule[-1.2ex]{0pt}{0pt} 
    \multirow{2}{*}{$k$} 
    & \multirow{2}{*}{Example} 
    & \multirow{2}{*}{$\mathbbm E[\Y{M}{k}]$}
    & \multirow{2}{*}{$\var(\Y{M}{k})$} 
    & \multicolumn{3}{c}{Variance bound from} \\
    & & & 
    & Theorem 1 & Theorem 2 & Theorem 3 \\
    \hline\rule{0pt}{2.6ex}\rule[-1.2ex]{0pt}{0pt} 
    1 & E1 & $0.549$ & $0.302$ & $6.880$ & $0.302$ & $0.302$ \\\rule{0pt}{2.6ex}\rule[-1.2ex]{0pt}{0pt}
     & E2 & $0.689$ & $0.115$ & $6.880$ & $0.302$ & $0.302$ \\\rule{0pt}{2.6ex}\rule[-1.2ex]{0pt}{0pt}
     & E3 & $0.683$ & $0.021$ & $6.880$ & $0.302$ & N/A \\\rule{0pt}{2.6ex}\rule[-1.2ex]{0pt}{0pt}
     & E4 & $0.739$ & $0.005$ & $6.880$ & $0.302$ & N/A\\\rule{0pt}{2.6ex}\rule[-1.2ex]{0pt}{0pt}
    5 & E1 & $2.747$ & $1.191$ & $27.156$ & $1.509$ & $1.191$ \\\rule{0pt}{2.6ex}\rule[-1.2ex]{0pt}{0pt}
     & E2 & $3.446$ & $0.454$ & $27.156$ & $1.509$ & $1.191$ \\\rule{0pt}{2.6ex}\rule[-1.2ex]{0pt}{0pt}
     & E3 & $3.283$ & $0.091$ & $27.156$ & $1.509$ & N/A \\\rule{0pt}{2.6ex}\rule[-1.2ex]{0pt}{0pt}
     & E4 & $3.649$ & $0.020$ & $27.156$ & $1.509$ & N/A \\\rule{0pt}{2.6ex}\rule[-1.2ex]{0pt}{0pt}
    10 & E1 & $5.493$ & $1.588$ & $36.208$ & $3.017$ & $1.588$ \\\rule{0pt}{2.6ex}\rule[-1.2ex]{0pt}{0pt}
     & E2 & $6.893$ & $0.605$ & $36.208$ & $3.017$ & $1.588$ \\\rule{0pt}{2.6ex}\rule[-1.2ex]{0pt}{0pt}
     & E3 & $6.213$ & $0.128$ & $36.208$ & $3.017$ & N/A \\\rule{0pt}{2.6ex}\rule[-1.2ex]{0pt}{0pt}
     & E4 & $7.176$ & $0.031$ & $36.208$ & $3.017$ & N/A \\\rule{0pt}{2.6ex}\rule[-1.2ex]{0pt}{0pt}
    19 & E1 & $10.437$ & $0.302$ & $6.880$ & $0.302$ & $0.302$ \\\rule{0pt}{2.6ex}\rule[-1.2ex]{0pt}{0pt}
     & E2 & $13.096$ & $0.115$ & $6.880$ & $0.302$ & $0.302$ \\\rule{0pt}{2.6ex}\rule[-1.2ex]{0pt}{0pt}
     & E3 & $10.570$ & $0.022$ & $6.880$ & $0.302$ & N/A \\\rule{0pt}{2.6ex}\rule[-1.2ex]{0pt}{0pt}
     & E4 & $13.155$ & $0.008$ & $6.880$ & $0.302$ & N/A \\
\hline
\end{tabular}
\end{center}
\vspace{-0.5cm}
\caption{Expectation and variance for $\Y{M}{k}$ for Examples E1, E2, E3, and E4. For comparison, we show the numerical values of the variance bounds from \Cref{th:ldi} (see \Cref{eq:ldi_varbound}) and \Cref{th:support} (see \Cref{eq:support_varbound}). For the examples with diagonal matrices (E1 and E2), we also show the numerical value of the variance bound from \Cref{th:diag} (see \Cref{eq:diag_varbound}).}
\label{tab:examples}
\end{table}

In \Cref{tab:examples}, we show $\E[\Y{M}{k}]$ and $\var(\Y{M}{k})$ for the distributions in \Cref{fig:pdfs}. We first consider the expectation of $\Y{M}{k}$. For all four examples, we observe that $\E[\Y{M}{k}]$ increases with $k$. For all examined
values of $k$, we see that $\E[\Y{M_{\textrm{E4}}}{k}]>\E[\Y{M_{\textrm{E2}}}{k}]>\E[\Y{M_{\textrm{E3}}}{k}]>\E[\Y{M_{\textrm{E1}}}{k}]$. Our observations thus suggest that the expectation of $\Y{M}{k}$ is large when we choose eigenvalues of $M$ uniformly at random from the interval $[1,\kappa]$ and small when we set half of the eigenvalues of $M$ to $1$ and the other half to $\kappa$. 

We now give several observations about the variance of $\Y{M}{k}$. For all examined values of $k$, we see that $\var(\Y{M_{\textrm{E1}}}{k})>\var(\Y{M_{\textrm{E2}}}{k})>\var(\Y{M_{\textrm{E3}}}{k})>\var(\Y{M_{\textrm{E4}}}{k})$. Our observation of larger $\var(\Y{M}{k})$ for the examples with diagonal matrices (Examples E1 and E2) than for the examples with non-diagonal matrices (Examples E3 and E4) gives intuitive support for \Cref{conj:diag}. Our observation that $\var(\Y{M_{\textrm{E1}}}{k})>\var(\Y{M_{\textrm{E2}}}{k})$ reflects the fact that Example E1 maximizes the variance in this case (see \Cref{th:diag}). 

For all examined $k$, the value of the variance bound in \Cref{th:ldi} (see \Cref{eq:ldi_varbound}) is at least 12 times larger than the value of the variance bound in \Cref{th:support} (see \Cref{eq:support_varbound}). For $k=1$ and $k=19$, the cases in which $\wedge_{n,k}=1$, the value of the variance bound in \Cref{th:support} is equal to the value of the variance bound for diagonal positive-definite matrices (\Cref{th:diag}). Additionally, it is sharp in Example E1. 

\begin{figure}[t]
\centering
\includegraphics[trim={0cm 0.25cm 0cm 0.5cm},clip,width=1\textwidth]{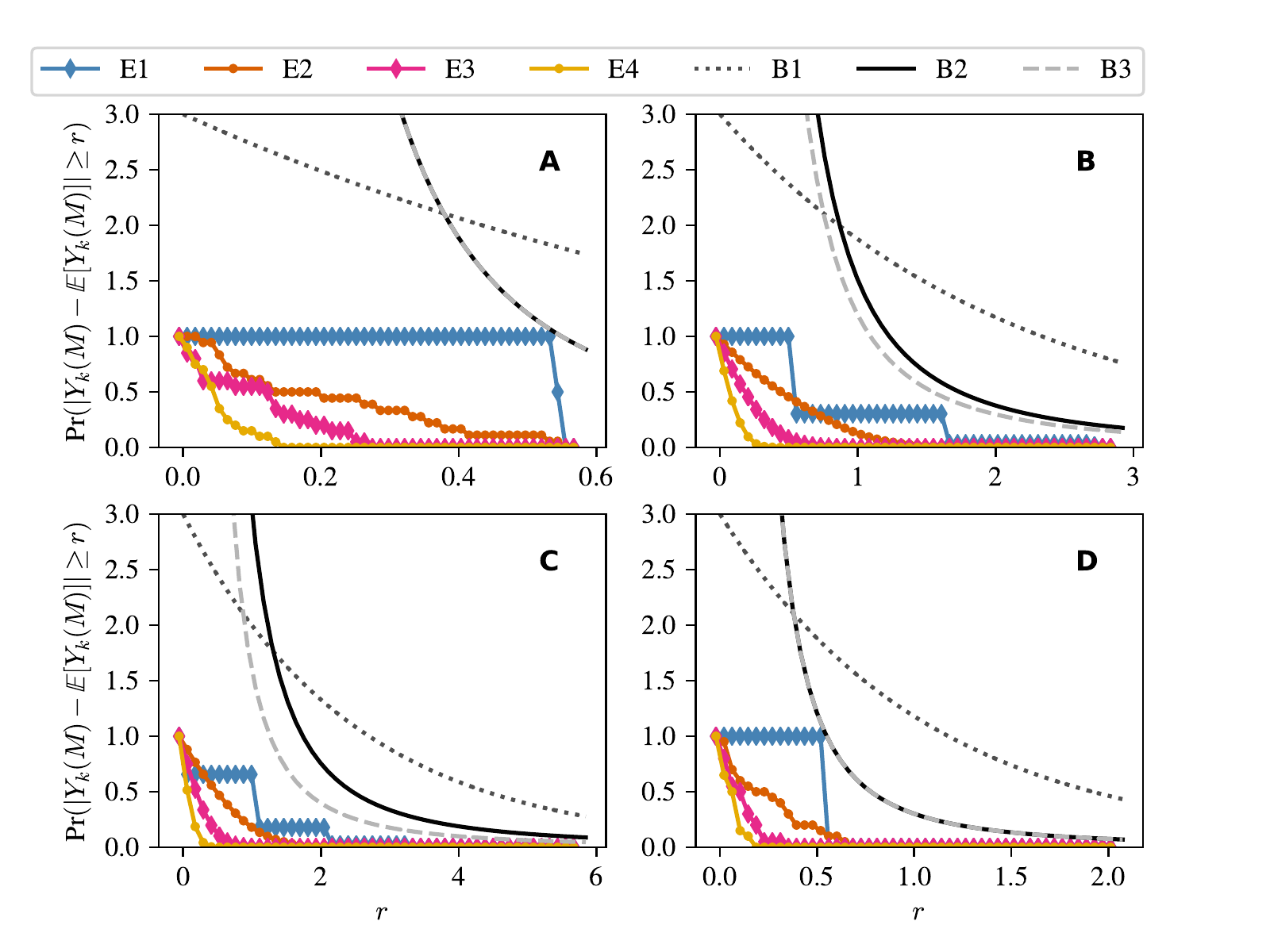}
\caption{Empirical tails of the distribution of log-minors. We show $\prob(|\Y{M}{k}-\mathbbm E[\Y{M}{k}]|\geq r)$ for positive-definite $n\times n$ matrices from examples E1, E2, E3, and E4 with $n=20$ and condition number $\kappa=3$. We show results for $k\times k$ submatrices with \panel{A} $k=1$, \panel{B} $k=5$, \panel{C} $k=10$, and \panel{D} $k=19$. The curve B1 represents the tail bound from \Cref{th:ldi}. The curves B2 and B3 visualize Chebyshev bounds that we obtain from the variance bounds in \Cref{th:support,th:diag}, respectively.}
\label{fig:large_deviations}
\end{figure}

In Fig.\,\ref{fig:large_deviations}, we show the empirical tails $\prob(|\Y{M}{k}-\mathbbm E[\Y{M}{k}]|\geq r)$ for our four examples. We also show the tail bound B1 from \Cref{th:ldi} and two Chebyshev bounds\footnote{We can obtain a tail bound from a variance bound by using Chebyshev's inequality \cite{Blitzstein2014} (page 429), $\operatorname{Pr}(|X-\mathbbm E[X]|\geq r)\leq \frac{\var(X)}{r^2}$, for an integrable random variable $X$ and $r\in\mathbbm R_+$.}, B2 and B3, which we obtain from the variance bounds in \Cref{th:support,th:diag}, respectively. Consistent with our observations in \Cref{tab:examples} on $\var(\Y{M}{k})$, we observe that the tail probability tends to be larger for the examples with diagonal matrices (Examples E1 and E2) than for the examples with non-diagonal matrices (Examples E3 and E4).  

The difference in functional form guarantees that the bound B1 intersects with the Chebyshev bound B2 at two values of $r$. If we denote these values by $r'$ and $r''>r'$, the bound B1 is sharper than B2 on $[0,r']$ and $[r'',\infty)$. In our observations, both bounds exceed the trivial bound $\prob(|\Y{M}{k}-\mathbbm E[\Y{M}{k}]|\geq r)\leq 1$ on $[0,r']$. The value $r''$ lies outside the support of $\prob(|\Y{M}{k}-\mathbbm E[\Y{M}{k}]|\geq r)$. We thus see that B1 is sharper than B2 only for values of $r$ for which neither bound is informative.

For $k=1$ and $k=19$, the bounds B2 and B3 coincide and are sharp at $r=(\log\kappa)/2$ when $M=M_{\textrm{E1}}$. For $k=5$ and $k=10$, the bound (B3) for diagonal positive-definite matrices is sharper than the bound (B2) for general positive-definite matrices. The difference between the two bounds is most visible for $k=10$, which is the case that maximizes $\wedge_{n,k}$.


\section{Estimating mean subsystem entropy}\label{sec:sampling}

We now consider the implications of our results in Section \ref{sec:results} for the problem of estimating the mean subsystem entropy of a given system of coupled variables. When the joint distribution of variables is a multivariate normal distribution, one can compute the differential entropy of a subsystem by applying \Cref{eq:diff_ent} to the corresponding sub-covariance matrix. We are interested in the \emph{mean subsystem entropy} $\E[h(A_k(M))]$ for subsystems of $k$ variables. As we noted previously, the large number of subsystems for even modest values of $n$ and $k$ render it prohibitive to exactly compute $\E[h(A_k(M))]$. Fortunately, the tail and variance bounds in Section \ref{sec:results} allow us to instead provide sampling guarantees, through which one can achieve a prescribed sampling accuracy. We give upper bounds on  the standard error and on the coefficient of variation for both a sample mean of $\Y{M}{k}$ and a sample mean of subsystem entropy.

Fix a subsystem size $k$  and sample size $q\geq 1$. The $q$-sample mean of $\Y{M}{k}$ is
\begin{align*}
    S_Y := \frac{1}{q}\sum_{i = 1}^q \ld{A_i}\,,
\end{align*}
where we choose each $A_i$ uniformly at random from $\mathcal A_k(M)$. The $q$-sample mean of subsystem entropy is
\begin{align*}
    S_h := \frac{1}{q}\sum_{i = 1}^q h(A_i)\,.
\end{align*}
We use $S_Y$ and $S_h$ as estimators of the population means $\E[\Y{M}{k}]$ and $\E[h(A_k(M))]$, respectively. These estimators are unbiased, as $\E[S_Y]=\E[\Y{M}{k}]$ and $\E[S_h]=\E[h(A_k(M))]$. A measure of reliability of an estimator is the \emph{standard error}, which one computes as the estimator's standard deviation. Because $h(A_k(M))$ differs from $\Y{M}{k}/2$ by a constant, the sample mean $S_h$ has the standard error
\begin{align*}
    \hat{\sigma}(S_h) = \frac{1}{2}\hat{\sigma}(S_Y) = \frac{1}{2}\sqrt{\frac{\var \left(\Y{M}{k}\right)}{q}}\,.
\end{align*}
We may therefore use the bounds of \Cref{th:ldi,th:support,th:diag} to derive bounds on the standard error for $S_Y$ and $S_h$. 

\begin{cor}[Standard error of the sample mean subsystem entropy]
Let $M$ be a covariance matrix of an $n$-variate normal distribution, and suppose that the condition number of $M$ satisfies $\kappa(M)\leq\K$. Let $S_h$ be the $q$-sample mean of the entropy of subsets of $k$ variables; and let $S_Y$ be the $q$-sample mean of log-determinants of $k\times k$ principal submatrices of $M$. It then follows, for any subsystem size $k$, that the standard error of the mean subsystem entropy is $\hat\sigma(S_h)=\hat\sigma(S_Y)/2$ and that $\hat\sigma(S_Y)$ satisfies
\begin{equation}
    \hat{\sigma}(S_Y)\leq\sqrt{\frac{6k(n-k)}{qn}}\log \K\,,\label{eq:se1}
\end{equation}
and 
\begin{equation}
    \hat{\sigma}(S_Y)\leq\frac{1}{2}\sqrt{\frac{\wedge_{n,k}}{q}}\log \K\,.\label{eq:se2}
\end{equation}
Furthermore, if $M$ is diagonal,
\begin{equation}
    \hat{\sigma}(S_Y)\leq\frac{1}{2}\sqrt{\frac{k(n-k)}{q(n-1)}}\log\K\,.\label{eq:se3}
\end{equation}
\end{cor}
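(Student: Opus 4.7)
The plan is to observe that this corollary is essentially a bookkeeping consequence of the three variance bounds already established in \Cref{th:ldi,th:support,th:diag}, combined with two elementary facts: (i) the standard error of the sample mean of $q$ independent and identically distributed draws from a distribution with variance $v$ is $\sqrt{v/q}$, and (ii) the subsystem entropy $h(A_k(M))$ differs from $\Y{M}{k}/2$ by an additive constant (from \Cref{eq:diff_ent} applied to a $k\times k$ submatrix), so $\var(h(A_k(M))) = \var(\Y{M}{k})/4$.

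First I would set up the standard-error computation. Because each $A_i$ in the definition of $S_Y$ is drawn uniformly at random from $\mathcal A_k(M)$, independently of the others, the variance of the sample mean is
\begin{equation*}
\var(S_Y) = \frac{1}{q}\var(\Y{M}{k})\,,
\end{equation*}
so $\hat\sigma(S_Y) = \sqrt{\var(\Y{M}{k})/q}$. The relation $h(A_i) = \Y{M}{k}/2 + c$ (with $c = (k/2)(1+\log(2\pi))$) upon evaluation at the sampled principal submatrices then gives $S_h = S_Y/2 + c$, from which $\hat\sigma(S_h) = \hat\sigma(S_Y)/2$ follows immediately.

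Next I would substitute each of the three variance bounds in turn. Plugging \Cref{eq:ldi_varbound} of \Cref{th:ldi} into $\hat\sigma(S_Y) = \sqrt{\var(\Y{M}{k})/q}$ produces \Cref{eq:se1}; plugging \Cref{eq:support_varbound} of \Cref{th:support} produces \Cref{eq:se2}, where the factor of $1/2$ comes from the $1/4$ inside the square root; and, when $M$ is diagonal, plugging \Cref{eq:diag_varbound} of \Cref{th:diag} produces \Cref{eq:se3}. Each of these is a one-line calculation that simply takes the square root of the corresponding variance bound, divided by $q$.

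There is no real obstacle here, since the substantive work was done in establishing the three variance bounds. The only thing to be careful about is the i.i.d.\ sampling assumption: the corollary implicitly assumes the $A_i$ are drawn independently (as is standard for a sample mean), which is what allows the variance to decompose as $\var(\Y{M}{k})/q$. Once that is stated, the proof is a direct chain of substitutions.
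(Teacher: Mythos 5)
Your overall route is exactly the paper's: the paper gives no separate proof environment for this corollary, but the prose immediately preceding it makes the same two observations you do (unbiasedness and $\hat\sigma(S_h)=\tfrac12\hat\sigma(S_Y)=\tfrac12\sqrt{\var(\Y{M}{k})/q}$ because $h(A_k(M))$ differs from $\Y{M}{k}/2$ by a constant) and then states that the three bounds follow by substituting the variance bounds of \Cref{th:ldi,th:support,th:diag}. Your derivations of \Cref{eq:se1} from \Cref{eq:ldi_varbound} and of \Cref{eq:se3} from \Cref{eq:diag_varbound} are correct one-line substitutions, and your remark about the implicit i.i.d.\ (with-replacement) sampling assumption is apt.

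There is, however, a genuine problem with your claim that substituting \Cref{eq:support_varbound} ``produces'' \Cref{eq:se2}. That substitution gives
\begin{equation*}
\hat\sigma(S_Y)\;\leq\;\sqrt{\tfrac{1}{4q}\left(\wedge_{n,k}\log\K\right)^2}\;=\;\frac{\wedge_{n,k}}{2\sqrt{q}}\,\log\K\;=\;\frac{1}{2}\sqrt{\frac{\wedge_{n,k}^2}{q}}\,\log\K\,,
\end{equation*}
whereas \Cref{eq:se2} as printed has $\sqrt{\wedge_{n,k}/q}$ rather than $\sqrt{\wedge_{n,k}^2/q}$. These differ by a factor of $\sqrt{\wedge_{n,k}}$, so for $\wedge_{n,k}>1$ the printed \Cref{eq:se2} is strictly stronger than what \Cref{eq:support_varbound} yields, and your ``one-line calculation'' does not establish it. The discrepancy appears to originate in the paper itself (the surrounding discussion, e.g.\ the claim that choosing $q\propto k$ keeps the standard error bounded, is consistent with the printed $\sqrt{\wedge_{n,k}/q}$ but not with the $k^2$ scaling of \Cref{eq:support_varbound}), so either \Cref{eq:se2} contains a typo or it requires a variance bound of order $\wedge_{n,k}(\log\K)^2$ that is not among the three proved theorems. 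Either way, you should not assert that the substitution delivers \Cref{eq:se2} as stated; you should either derive the weaker bound $\wedge_{n,k}\log\K/(2\sqrt{q})$ or flag that the printed inequality does not follow from \Cref{th:support}.
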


The \emph{coefficient of variation} $\cv{S}$ is another measure of reliability for estimators. It measures the size of the typical error of an estimator $S$ as a fraction of the magnitude of $\E[S]$. As a formula, it is given by
\begin{align}
    \cv{S} := \frac{\hat{\sigma}(S)}{\E[S]}\,. \label{eq:cv_def}
\end{align}
The coefficient of variation for $S_Y$ arises from the standard deviation of the relative error
\begin{align*}
    \mathcal{E} := \abs{\frac{\Y{M}{k} - \E[\Y{M}{k}]}{\E[\Y{M}{k}]}}\,
\end{align*}
of $\Y{M}{k}$ because
\begin{align*}
    \frac{\hat{\sigma}(S_Y)}{\E[S_Y]}
    =\frac{\hat{\sigma}(\Y{M}{k})- \E[\Y{M}{k}]}{\E[\Y{M}{k}]\sqrt{q}}
    =\hat{\sigma}\left(\left|\frac{\Y{M}{k}-\E[\Y{M}{k}]}{\E[\Y{M}{k}]}\right|\right)/\sqrt{q}
    =\hat{\sigma}(\mathcal E)/\sqrt{q}\,. 
\end{align*}

For a multivariate Gaussian distribution, the following corollaries give bounds on the coefficient of variation for the sample mean of log-minors and for the sample mean of subsystem entropy.

\begin{cor}[Coefficient of variation for a sample mean of log-minors] \label{cor:rel_err}
    Let $\lambda_1(M),\ldots,\lambda_n(M)$ be the eigenvalues of $M$; we order them from largest to smallest. 
    Let $\ell(M) = \min\{\abs{\log \lambda_1(M)}, \abs{\log \lambda_n(M)}\}$. 
    If $\ell(M)\neq0$, the coefficient of variation for a $q$-sample mean $S_Y$ of $\Y{M}{k}$ satisfies 
    \begin{align}
        \cv{S_Y} \leq \frac{\log \hat{\kappa}}{\ell(M)}\sqrt{\frac{6(n-k)}{qkn}} \label{eq:cvy1}
    \end{align}
and
    \begin{align}
        \cv{S_Y} \leq \frac{\log \hat{\kappa}}{2\ell(M)}\sqrt{\frac{\wedge_{n,k}}{qk^2}}\,.\label{eq:cvy2}
    \end{align}
\end{cor}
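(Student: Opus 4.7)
The plan is to combine the standard-error bounds of Corollary~1 with a lower bound on $|\E[S_Y]|$ obtained from Cauchy's interlacing theorem. Since $S_Y$ is an unbiased estimator of $\E[\Y{M}{k}]$, we have $\E[S_Y]=\E[\Y{M}{k}]$, and the coefficient of variation $\cv{S_Y}=\hat{\sigma}(S_Y)/\E[S_Y]$ is naturally read in absolute value as $\hat\sigma(S_Y)/|\E[\Y{M}{k}]|$. Bounding the numerator is immediate from \Cref{eq:se1,eq:se2}; the new work lies in lower-bounding the denominator.

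For that lower bound, I would apply Cauchy's interlacing theorem (\Cref{prop:cauchy}) successively, exactly as in the proof of \Cref{th:support}, to conclude that $k\log\lambda_n(M)\leq \Y{M}{k}\leq k\log\lambda_1(M)$ almost surely, so that $\E[\Y{M}{k}]$ lies in the same interval. When $\lambda_n(M)\geq 1$ (so all eigenvalues are at least $1$), this gives $\E[\Y{M}{k}]\geq k\log\lambda_n(M)=k|\log\lambda_n(M)|\geq k\ell(M)$; when $\lambda_1(M)\leq 1$, the symmetric argument yields $-\E[\Y{M}{k}]\geq k|\log\lambda_1(M)|\geq k\ell(M)$. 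In either ``one-sided'' spectral regime one therefore has $|\E[\Y{M}{k}]|\geq k\ell(M)$.

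Combining this lower bound with the standard-error bounds of Corollary~1 and dividing through by $k$ gives
\begin{align*}
    \cv{S_Y}\leq \frac{\hat\sigma(S_Y)}{k\,\ell(M)}\,.
\end{align*}
Substituting \Cref{eq:se1} and using $\sqrt{k(n-k)/n}/k=\sqrt{(n-k)/(kn)}$ produces \Cref{eq:cvy1}; substituting \Cref{eq:se2} and using $\sqrt{\wedge_{n,k}}/k=\sqrt{\wedge_{n,k}/k^2}$ produces \Cref{eq:cvy2}. Both simplifications are purely algebraic and should take only a couple of lines.

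The main obstacle is the mixed spectral case $\lambda_n(M)<1<\lambda_1(M)$: the interval $[k\log\lambda_n(M),\,k\log\lambda_1(M)]$ then contains $0$, so the crude lower bound $|\E[\Y{M}{k}]|\geq k\ell(M)$ can fail. For instance, a diagonal matrix with $\lambda_1\lambda_n=1$ and $k=1$ realises $\E[\Y{M}{1}]=0$ while $\ell(M)>0$. A complete proof therefore needs either a restriction to the one-sided regime (which is natural for suitably normalised covariance matrices) or a refinement that replaces $k\ell(M)$ by a lower bound on $|\E[\Y{M}{k}]|$ that uses finer information about the empirical spectrum than just its two extreme eigenvalues.
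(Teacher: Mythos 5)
Your argument is the same as the paper's: the numerator of $\cv{S_Y}$ is controlled by the variance bounds of Theorems~1 and~2 (equivalently, the standard-error bounds \Cref{eq:se1,eq:se2}), and the denominator is bounded below by $|\E[\Y{M}{k}]|\geq k\,\ell(M)$, after which the two displayed inequalities are pure algebra. The paper's proof is exactly this, with the denominator bound asserted in one line and no further justification.

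The obstacle you flag in your last paragraph is genuine, and it is a gap in the paper's own proof, not merely in your write-up. Cauchy interlacing gives $k\log\lambda_n(M)\leq \Y{M}{k}\leq k\log\lambda_1(M)$ almost surely, so $|\E[\Y{M}{k}]|\geq k\,\ell(M)$ follows only when $0$ is not interior to this interval, i.e.\ when $\log\lambda_1(M)$ and $\log\lambda_n(M)$ have the same sign. In the mixed regime $\lambda_n(M)<1<\lambda_1(M)$ the bound can fail outright, and your counterexample is correct: for $M=\operatorname{diag}(2,1/2)$ and $k=1$ one has $\E[\Y{M}{1}]=0$ while $\ell(M)=\log 2>0$, so $\cv{S_Y}$ is undefined (the variance is positive) and cannot satisfy a finite bound; the hypothesis $\ell(M)\neq 0$ as stated does not exclude this case. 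The clean fix is the one you propose: add the hypothesis that the spectrum lies entirely on one side of $1$ (equivalently $\log\lambda_1(M)\cdot\log\lambda_n(M)\geq 0$), under which your one-sided argument gives $|\E[\Y{M}{k}]|\geq k\,\ell(M)$ and the rest goes through. So your proposal reconstructs the intended proof and, in addition, correctly identifies the missing hypothesis; no further repair is needed beyond stating that hypothesis explicitly.
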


\begin{proof}
    This corollary follows from \Cref{eq:cv_def}. We use \Cref{eq:ldi_varbound,eq:support_varbound} as upper bounds on the numerator. For all $k$, a lower bound on the denominator is $\abs{\E[Y_k(M)]} \geq k\ell(M)$. 
\end{proof}

\begin{cor}[Coefficient of variation for mean subsystem entropy]
    For an $n$-variate Gaussian distribution with covariance matrix $M$, the coefficient of variation $c_v(S_h)$ for a $q$-sample mean $S_h$ of subsystem entropy satisfies
    \begin{align}
        c_v(S_h) \leq \frac{2\log \hat{\kappa}}{\ell(M) + \log 2e\pi}\sqrt{\frac{6(n-k)}{qkn}} \label{eq:cvh1}
    \end{align}
and
    \begin{align}
        c_v(S_h) \leq \frac{\log \hat{\kappa}}{\ell(M) + \log 2e\pi}\sqrt{\frac{\wedge_{n,k}}{qk^2}}\,.\label{eq:cvh2}
    \end{align}
\end{cor}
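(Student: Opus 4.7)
The plan is to follow the template of \Cref{cor:rel_err}, using the affine relationship $h(A) = \tfrac{1}{2}\log\det A + \tfrac{k}{2}\log(2e\pi)$ between subsystem entropy and the log-minor of a $k\times k$ principal submatrix to reduce the coefficient-of-variation bound for $S_h$ to a combination of the standard-error bounds for $S_Y$ already established and a lower bound on $|\E[S_h]|$.

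First, I would write $c_v(S_h) = \hat\sigma(S_h)/|\E[S_h]|$ directly from \Cref{eq:cv_def}. Because $h$ is an affine transformation of $\Y{M}{k}$, we have $\hat\sigma(S_h) = \hat\sigma(S_Y)/2$; this relation is already noted in the text preceding \Cref{eq:se1}. Applying the standard-error bound \Cref{eq:se1} (which in turn uses \Cref{th:ldi}) supplies the numerator estimate needed for \Cref{eq:cvh1}, and applying \Cref{eq:se2} (which uses \Cref{th:support}) supplies the numerator estimate needed for \Cref{eq:cvh2}.

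Next, I would lower-bound $|\E[S_h]| \geq \tfrac{k}{2}(\ell(M) + \log 2e\pi)$. Since $\E[S_h] = \tfrac{1}{2}\E[\Y{M}{k}] + \tfrac{k}{2}\log(2e\pi)$ and $\E[\Y{M}{k}]$ lies in the interval $[k\log\lambda_n(M),\,k\log\lambda_1(M)]$ by Cauchy's interlacing theorem (as already used in the proof of \Cref{th:support}), a short case analysis on the sign of $\E[\Y{M}{k}]$, combined with the definition $\ell(M) = \min\{|\log\lambda_1(M)|,|\log\lambda_n(M)|\}$, yields the desired lower bound. Dividing the numerator bound by this denominator bound and simplifying the factors of $k$ then produces both \Cref{eq:cvh1} and \Cref{eq:cvh2}.

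The main technical obstacle is the lower bound on $|\E[S_h]|$ when the eigenvalues of $M$ straddle $1$: in that regime $\E[\Y{M}{k}]$ can be close to zero or negative, so the inequality $|\E[\Y{M}{k}]| \geq k\ell(M)$ used in the proof of \Cref{cor:rel_err} is not directly available in the appropriate sign. Handling this case cleanly likely requires either an additional assumption (for instance, that all eigenvalues of $M$ are at least $1$, so that $\E[\Y{M}{k}] \geq k\ell(M) \geq 0$ and the shift by $\tfrac{k}{2}\log(2e\pi)$ simply adds to the magnitude) or a more delicate triangle-inequality argument that exploits the fact that $\log(2e\pi)$ is comparatively large relative to $\ell(M)$ whenever $\kappa(M)$ is bounded and the eigenvalues are well-conditioned.
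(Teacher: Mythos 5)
Your proposal matches the paper's own (one-sentence) proof exactly: apply \Cref{eq:cv_def}, bound the numerator via the variance bounds \Cref{eq:ldi_varbound,eq:support_varbound} (equivalently the standard-error bounds, using $\hat\sigma(S_h)=\hat\sigma(S_Y)/2$), and lower-bound the denominator via \Cref{eq:diff_ent}. The sign issue you flag in your last paragraph is a genuine one that the paper's proof also leaves unaddressed --- the bound $|\E[S_h]|\geq \tfrac{k}{2}\left(\ell(M)+\log 2e\pi\right)$ can fail when eigenvalues of $M$ lie below $1$ (e.g., if all eigenvalues equal $(2e\pi)^{-1}$ then $\E[S_h]=0$ while the stated bound is finite) --- so your proposed extra hypothesis or a more careful case analysis is a necessary repair rather than a defect of your argument.
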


\begin{proof}
    We derive this result from \Cref{eq:cv_def}; we use \Cref{eq:ldi_varbound,eq:support_varbound} to bound the numerator, and we use \Cref{eq:diff_ent} to bound the expectation in the denominator. 
\end{proof}

The bounds in \Cref{eq:cvy2,eq:cvh2} are sharper bounds than \Cref{eq:cvy1,eq:cvh1}. From \Cref{eq:cvy2,eq:cvh2}, we see that both $\cv{S_Y}$ and $\cv{S_h}$ decay in proportion to $\sqrt{k}$. Indeed, under a certain regularity condition (which we specify in \Cref{cor:rel_err_concentration}), the coefficient of variation decays to $0$ in the limit of large $n$ and large $k$. 

\begin{cor}[Concentration of the relative error] \label{cor:rel_err_concentration}
    Let $\{M_i\}$ be a sequence of positive-definite matrices of dimension $n(i)$. Let $k = k(i) \leq n(i)$ be a function of $i$. Suppose that the sequence 
    \begin{align}
        a_i := \sqrt{k(i)} \ell(M_i) \label{eq:criterion}
    \end{align}   
    is nondecreasing and unbounded. It then follows that $c_v(S_Y) \rightarrow 0$ and $\mathcal{E}$ converges in probability to $0$ as $i$ becomes large. 
\end{cor}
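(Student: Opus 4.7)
The plan is to treat the two assertions separately, each by invoking one of the bounds already proved in the paper and arranging matters so that the dependence on $i$ appears only through $a_i$. Since $a_i$ is nondecreasing and unbounded by hypothesis, both conclusions then follow immediately.

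For $c_v(S_Y)\to 0$, I would apply the sharper coefficient-of-variation bound in \Cref{eq:cvy2}. Using $\wedge_{n,k}=\min\{k,n-k\}\leq k$ to simplify the square root collapses that inequality to
\begin{equation*}
c_v(S_Y)\,\leq\,\frac{\log\K}{2\sqrt{q}\,\ell(M_i)\sqrt{k(i)}}\,=\,\frac{\log\K}{2\sqrt{q}\,a_i}\,,
\end{equation*}
which tends to $0$ as $a_i\to\infty$. This part is essentially a one-line consequence of the previous corollary.

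For $\mathcal{E}\to 0$ in probability, Chebyshev's inequality applied to \Cref{eq:support_varbound} only yields a polynomial bound of order $\wedge_{n,k}^2/(k\ell(M_i))^2$, which is not strong enough under the hypothesis $\sqrt{k}\ell(M_i)\to\infty$ alone. I would therefore use the exponential tail bound in \Cref{eq:ldi}. Fix $\epsilon>0$ and rewrite the event $\{\mathcal{E}\geq\epsilon\}$ as $\{|\Y{M_i}{k(i)}-\E[\Y{M_i}{k(i)}]|\geq\epsilon\,|\E[\Y{M_i}{k(i)}]|\}$. Invoking the lower bound $|\E[\Y{M_i}{k(i)}]|\geq k(i)\ell(M_i)$ (the same one used in the proof of \Cref{cor:rel_err}) and applying \Cref{eq:ldi} with $r=\epsilon\,k(i)\,\ell(M_i)$, the factor $k\sqrt{n/(k(n-k))}$ rearranges to $\sqrt{k}\sqrt{n/(n-k)}$, and using $\sqrt{n/(n-k)}\geq 1$ collapses the exponent to give
\begin{equation*}
\Pr(\mathcal{E}\geq\epsilon)\,\leq\,3\exp\!\left(-\frac{\epsilon\,a_i}{\log\K}\right)\,,
\end{equation*}
which vanishes as $i\to\infty$, proving convergence in probability.

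The main point of care is justifying the lower bound $|\E[\Y{M_i}{k(i)}]|\geq k(i)\,\ell(M_i)$ that is reused from \Cref{cor:rel_err}. By Cauchy's interlacing theorem (\Cref{prop:cauchy}), every eigenvalue of every principal $k\times k$ submatrix of $M_i$ lies in $[\lambda_{n(i)}(M_i),\lambda_1(M_i)]$, so when $\log\lambda_1(M_i)$ and $\log\lambda_{n(i)}(M_i)$ have the same sign the bound holds summand by summand and passes to the expectation. Once this step is in place, the two pieces of the proof reduce to algebraic bookkeeping against the single hypothesis $a_i\to\infty$.
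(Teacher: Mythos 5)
Your proof is correct, but for the convergence-in-probability half it takes a different route from what the paper intends. The paper gives no explicit proof of this corollary; the intended argument is simply to feed the bound of \Cref{cor:rel_err} into the identity $\hat\sigma(\mathcal E)=\sqrt{q}\,\cv{S_Y}$, so that $\E[\mathcal E^2]\to 0$ and convergence in probability follows from Markov's inequality. Your claim that a Chebyshev-type argument "is not strong enough" is only half right: it fails for \Cref{eq:support_varbound}, whose worst case $\wedge_{n,k}=k$ gives $\var(\Y{M}{k})/(\epsilon k\ell)^2 = O(1/\ell^2)$, but it succeeds for \Cref{eq:ldi_varbound} (equivalently \Cref{eq:cvy1}), since $6\,\tfrac{k(n-k)}{n}(\log\K)^2/(\epsilon k\ell)^2 \leq 6(\log\K)^2/(\epsilon^2 a_i^2)\to 0$ --- the nominally weaker variance bound scales as $k$ rather than $\wedge_{n,k}^2$ and is exactly what makes the paper's one-line argument work. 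Your alternative, applying the exponential tail bound \Cref{eq:ldi} at $r=\epsilon k\ell(M_i)$, is also valid (the algebra $k\sqrt{n/(k(n-k))}\geq\sqrt{k}$ checks out for $k<n$; for $k=n$ the variable is deterministic and $\mathcal E=0$) and buys an exponential rate $3e^{-\epsilon a_i/\log\K}$ in place of the polynomial rate $O(a_i^{-2})$. Two caveats apply equally to your argument and the paper's: the condition number must be uniformly bounded along the sequence (otherwise $\log\K$ cannot be treated as a constant against $a_i$), and the lower bound $\abs{\E[\Y{M_i}{k}]}\geq k\,\ell(M_i)$ requires $\log\lambda_1$ and $\log\lambda_{n}$ to have the same sign --- you flag this correctly, and it is in fact the reason for the condition \Cref{eq:ccon} in the remark that follows the corollary; the paper's own proof of \Cref{cor:rel_err} asserts this bound without that hypothesis, so your treatment is the more careful one.
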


\begin{remark}
A sufficient condition for the concentration of $\mathcal E$ is that the sequence $\{M_i\}$ has fixed condition number and the smallest eigenvalue $\lambda_n$ is bounded away from both $0$ and $1$. Formally, the latter condition is 
\begin{align}\label{eq:ccon}
    \text{there exists} \,\, \delta > 0 \,\, \text{such that, for all} \,\, i\,, \text{ we have}\,\, \lambda_{n(i)}(M_i) \in [\delta, 1 - \delta]\cap[1+\delta, \infty)\ \,. 
\end{align}
\end{remark}

\begin{remark}
A popular model for sample covariance matrices is the Wishart ensemble\footnote{The Wishart ensemble $W_n(V,n_f)$ with scale matrix $V$ and $n_f$ degrees of freedom is the ensemble of random matrices $M:=n_f^{-1}\sum_{i=1}^{n_f}X_{i}^TX_{i}$, where the $X_1,X_2,\dots,X_{n_f}$ are $n_f$ realizations of an $n$-variate random variable with 0-mean Gaussian distribution $N_n(0,V)$ \cite{Goodman1963,Katzav2010}.}. A sequence $\{M_i\}$ of Wishart matrices can satisfy the condition in \Cref{eq:ccon} if the ratio $c:=n/n_f$ of the number $n$ of variables and the number $n_f$ of degrees of freedom is $c\notin\{1/4,1\}$ \cite{Katzav2010,Majumdar2010}.
\end{remark}

\begin{figure}[tp]
\centering
\includegraphics[trim={0cm 0.75cm 0cm 0.5cm},clip,width=1\textwidth]{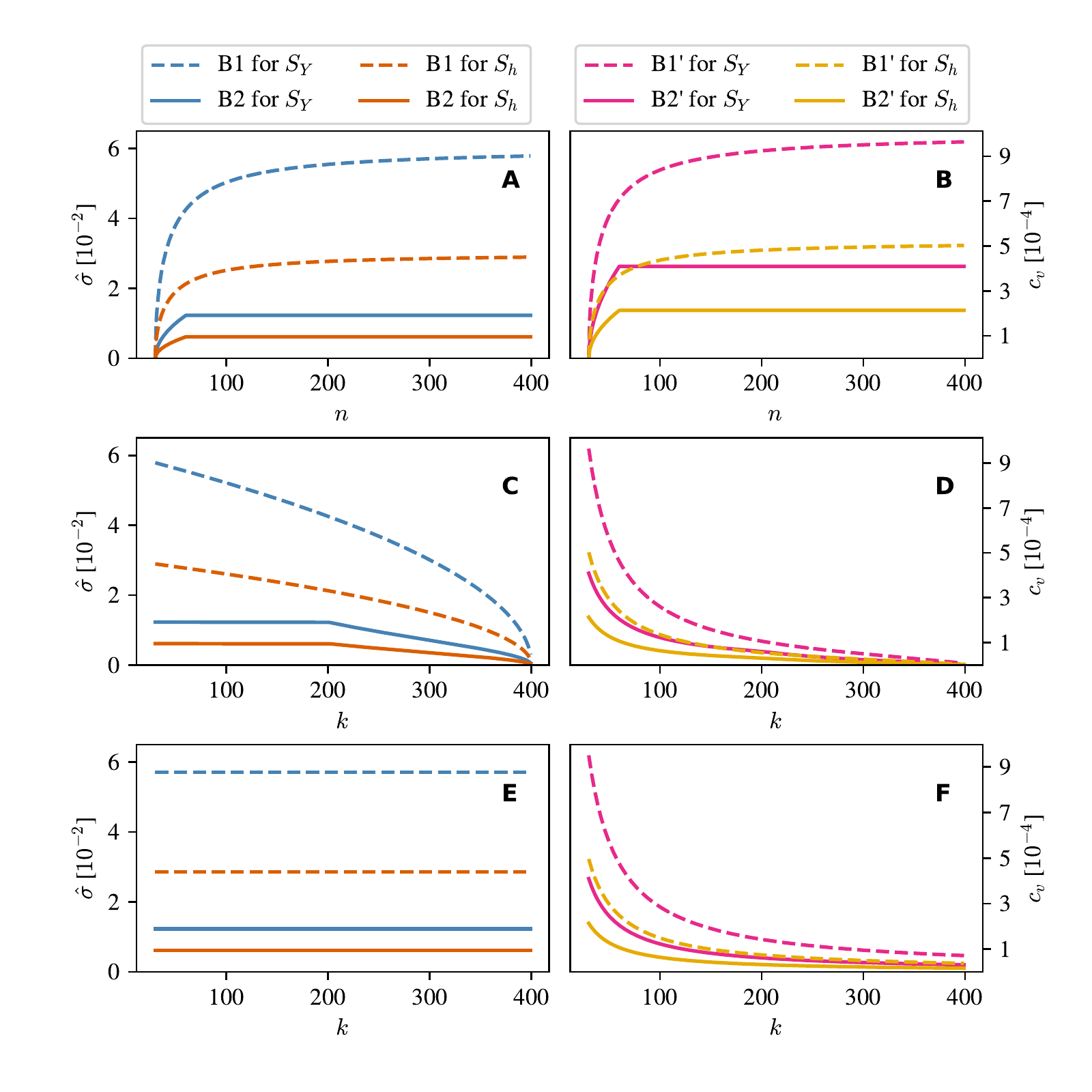}
\caption{Bounds on the standard error and coefficient of variation for a sample mean $S_Y$ of log-minors and a sample mean $S_h$ of subsystem entropy. In the left panels, we show the bounds B1 from \Cref{eq:se1} and B2 from \Cref{eq:se2} on the standard error of $S_Y$ and $S_h$. In the right panels, we show the bounds B1' (see \Cref{eq:cvy1,eq:cvh1}) and B2' (see \Cref{eq:cvy2,eq:cvh2}) on the coefficient of variation of $S_Y$ and $S_h$. We show results for progressively larger $n$ with fixed $k=30$ in panels \panel{A} and \panel{B}; results for progressively larger $k$ with fixed $n=400$ 
in panels \panel{C} and \panel{D}; and results for progressively larger $k$ with a fixed ratio $k/n=0.1$ in panels \panel{E} and \panel{F}. We compute bounds using $q=2000k$, $\K=3$, and $\ell(M)=1$.} 
\label{fig:sampling_error}
\end{figure}

One can use these bounds on the standard error to choose a sample size $q$ that guarantees a desired accuracy of a sample mean. In Fig.\,\ref{fig:sampling_error}, we show our bounds on the standard error and the coefficient of variation of $S_Y$ and $S_h$ with $q=2000k$ and $\ell(M)=1$. In the left panels, we show the bounds B1 from \Cref{eq:se1} and B2 from \Cref{eq:se2} on the standard error of $S_Y$ and $S_h$. In the right panels, we show the bounds B1' (see \Cref{eq:cvy1,eq:cvh1}) and B2' (see \Cref{eq:cvy2,eq:cvh2}) on the coefficient of variation of $S_Y$ and $S_h$. In panels \panel{A} and \panel{B}, we vary the system size $n$ for fixed subsystem size $k$. We observe for $n\leq 2k$ that the values of the bounds B2 and B2' increase with $n$. For $n>2k$, the bounds B2 and B2' are independent of $n$. The bounds B1 and B1' are less sharp than the bounds B2 and B2'. The values of B1 and B1' increase with $n$ and approach their asymptotic values from below. For example, the bound B1 for $\hat\sigma(S_Y)$ has a limiting value of $\sqrt{6k/q}\times\log\K\approx0.06$. In panels \panel{C} and \panel{D}, we vary $k$ for fixed $n$. We observe for $k\leq n/2$ that the value of the bound B2 on the standard error is independent of $k$. For $k>n/2$, the value of B2 decreases with increasing $k$ and is $0$ for $k=n$. The bound B1 is less sharp than B2. Its value decreases with increasing $k$ for any $k\leq n$. The values of the bounds B1' and B2' on the coefficient of variation decrease with increasing subsystem size and are $0$ for $k=n$. In panels \panel{E} and \panel{F}, we vary $k$ for fixed ratio $k/n$, and we observe that the bounds on the standard error are independent of $k$ if the ratio $k/n$ is constant. The values of the bounds B1' and B2' decrease with increasing $k$. This is consistent with our previous observation that $\cv{S_Y}$ vanishes if the sequence $a_i$ (see \Cref{eq:criterion}) becomes unbounded.

It is important to note that all of our bounds on the standard error and the coefficient of variation of $S_Y$ and $S_h$ are asymptotically constant in $n$. It is thus not necessary to sample proportionally more minors from a larger matrix. Instead, to guarantee a desired accuracy of a sample mean of log-minors or subsystem entropy, one can choose $q$ to be a function of $k$. To ensure that the standard error is constant or decreases with growing $n$ and $k$, it is sufficient to choose $q$ in linear proportion to $k$. When the smallest and largest eigenvalues of a system's correlation matrix are fixed, one can ensure that the coefficient of variation is constant or decreasing with growing $n$ and $k$ by choosing $q$ in linear proportion to $k^{-1}$.


\section{Conclusions}\label{sec:conclusions}

We examined the problem of estimating the mean subsystem entropy of a system of $n$ coupled variables with covariance matrix $M$. When the joint distribution of a system's variables is an $n$-variate Gaussian, $t$, or Cauchy distribution, the mean differential entropy of subsystems is an affine function of the log-minors of the covariance matrix \cite{Ahmed1989,Nadarajah2005}. We derived tail and variance bounds on the distribution of log-minors of fixed size of a positive-definite matrix with bounded condition number. Using our variance bounds, we provided upper bounds on the standard error and on the coefficient of variation of both the sample mean of log-minors and the sample mean of subsystem entropy. Our results indicate that, despite the rapid growth of the number of subsystems with $n$, the accuracy of these sample means is asymptotically independent of a system's size. Instead, it is sufficient to increase the number of samples in linear proportion to the size of subsystems to achieve a desired sampling accuracy. 

Our results are salient to studies that use mean subsystem entropy to examine systems of coupled variables \cite{Koetter2003,Tononi1994,Tononi1999}. Even for a system with as few as $50$ variables, sampling just 0.001\% of its subsystem entropies can require the computation of over a billion log-determinants. Using the largest and smallest eigenvalues of a system's covariance matrix to determine the number of samples that are needed to achieve a prescribed accuracy for a sample mean can thus facilitate a quantitative study of mean subsystem entropy when it would otherwise be impossible.

Throughout our paper, we relied only on knowledge of the largest and smallest eigenvalues of a system's covariance matrix. We expect that it is possible to derive sharper bounds than our current results when one knows the complete spectrum of a system's covariance matrix, likely by relying on Cauchy's interlacing theorem (\Cref{prop:cauchy}) to control the log-minors. 

We presented two bounds on the variance of a log-minor that we choose uniformly at random from the set of log-minors of size $k$ of an $n\times n$ positive-definite matrix. The variance bound in \Cref{th:support} is sharper than the one
in \Cref{th:ldi}, but either bound is sufficient to deduce that the accuracy of a sample mean of subsystem entropy is asymptotically independent of a system's size and that one can achieve a prescribed accuracy by choosing the number of samples in linear proportion to the size of subsystems.

The proof of our first bound (see \Cref{sec:proof2}) relies on the existence of an upper bound for the difference between $\ld{A}$ for two different principal submatrices $A\in\mathcal A_k(M)$ and the invariance of $\ld{A}$ under a basis transformation of $A$. The proof of our second bound (see \Cref{sec:proof1}) relies on the existence of an upper bound and a lower bound for the support of the distribution $\p_{M,k}$. 

Similar bounds and the invariance under basis transformation hold for several other matrix properties, including the largest and smallest eigenvalues. It is thus plausible that one can derive similar results for the standard error and coefficient of variation for many spectral properties of principal submatrices. For example, Chatterjee and Ledoux (2009) proved a large-deviation inequality for the empirical cumulative eigenvalue distribution of principal submatrices of Hermitian matrices \cite{Chatterjee2009}. These and other variance and tail bounds on submatrix properties offer welcoming  possibilities to enhance computational studies that characterize complex systems based on the mean properties of their subsystems. For example, they can provide guarantees for linear sketching techniques, which are relevant for data dimensionality reduction. They can also facilitate the use of methods of spectral graph analysis in the study of subgraphs, graphlets, and motifs in networks. 


\section*{Acknowledgements}

We thank Cl\'ement Canonne, Kameron Decker Harris, Michael Neely, and participants of the \textit{IPAM Quantitative Linear Algebra Tutorials} for helpful discussions. A.C.S. was supported by the Clarendon Fund, e-Therapeutics plc, and funding from the Engineering and Physical Sciences Research Council under grant number EP/L016044/1. P.S.C. was supported by the National Science Foundation under Graduate Research Fellowship Grant 1122374. 



\begin{thebibliography}{36}
\providecommand{\natexlab}[1]{#1}
\providecommand{\url}[1]{\texttt{#1}}
\expandafter\ifx\csname urlstyle\endcsname\relax
  \providecommand{\doi}[1]{doi: #1}\else
  \providecommand{\doi}{doi: \begingroup \urlstyle{rm}\Url}\fi

\bibitem[Skiena(2017)]{Skiena2017}
S.~S. Skiena.
\newblock \emph{{The Data Science Design Manual}}.
\newblock Springer, Cham, Switzerland, 2017.

\bibitem[Newman(2018)]{Newman2018}
M.~E.~J. Newman.
\newblock \emph{Networks}.
\newblock Oxford University Press, Oxford, United Kingdom, 2018.

\bibitem[Strogatz(2018)]{Strogatz2015}
S.~H. Strogatz.
\newblock \emph{{Nonlinear Dynamics and Chaos: With Applications to Physics,
  Biology, Chemistry, and Engineering}}.
\newblock Westview Press, Boulder, CO, USA, 2018.

\bibitem[Tononi et~al.(1994)Tononi, Sporns, and Edelman]{Tononi1994}
G.~Tononi, O.~Sporns, and G.~M. Edelman.
\newblock A measure for brain complexity: {R}elating functional segregation and
  integration in the nervous system.
\newblock \emph{Proceedings of the {N}ational {A}cademy of {S}ciences of the
  {U}nited {S}tates of {A}merica}, 91\penalty0 (11):\penalty0 5033--5037, 1994.

\bibitem[Tononi et~al.(1999)Tononi, Sporns, and Edelman]{Tononi1999}
G.~Tononi, O.~Sporns, and G.~M. Edelman.
\newblock Measures of degeneracy and redundancy in biological networks.
\newblock \emph{{Proceedings of the National Academy of Sciences of the United
  States of America}}, 96\penalty0 (6):\penalty0 3257--3262, 1999.

\bibitem[De~Lucia et~al.(2005)De~Lucia, Bottaccio, Montuori, and
  Pietronero]{Lucia2005}
M.~De~Lucia, M.~Bottaccio, M.~Montuori, and L.~Pietronero.
\newblock Topological approach to neural complexity.
\newblock \emph{{Physical Review E}}, 71\penalty0 (1):\penalty0 016114, 2005.

\bibitem[Randles et~al.(2011)Randles, Lamb, Odat, and
  Taleb-Bendiab]{Randles2011}
M.~Randles, D.~Lamb, E.~Odat, and A.~Taleb-Bendiab.
\newblock Distributed redundancy and robustness in complex systems.
\newblock \emph{{Journal of Computer and System Sciences}}, 77\penalty0
  (2):\penalty0 293--304, 2011.

\bibitem[Li et~al.(2012)Li, Dwivedi, Huang, Kemp, and Yi]{Li2012}
Y.~Li, G.~Dwivedi, W.~Huang, M.~L. Kemp, and Y.~Yi.
\newblock Quantification of degeneracy in biological systems for
  characterization of functional interactions between modules.
\newblock \emph{{Journal of Theoretical biology}}, 302:\penalty0 29--38, 2012.

\bibitem[Li and Yi(2016)]{Li2016}
Y.~Li and Y.~Yi.
\newblock Systematic measures of biological networks {II}: {D}egeneracy,
  complexity, and robustness.
\newblock \emph{{Communications on Pure and Applied Mathematics}}, 69\penalty0
  (10):\penalty0 1952--1983, 2016.

\bibitem[Shen-Orr et~al.(2002)Shen-Orr, Milo, Mangan, and Alon]{Shen-Orr2002}
S.~S. Shen-Orr, R.~Milo, S.~Mangan, and U.~Alon.
\newblock Network motifs in the transcriptional regulation network of
  {Escherichia coli}.
\newblock \emph{Nature genetics}, 31\penalty0 (1):\penalty0 64, 2002.

\bibitem[Milo et~al.(2002)Milo, Shen-Orr, Itzkovitz, Kashtan, Chklovskii, and
  Alon]{Milo2002}
R.~Milo, S.~Shen-Orr, S.~Itzkovitz, N.~Kashtan, D.~Chklovskii, and U.~Alon.
\newblock Network motifs: {S}imple building blocks of complex networks.
\newblock \emph{Science}, 298\penalty0 (5594):\penalty0 824--827, 2002.

\bibitem[Woodruff(2014)]{Woodruff2014}
D.~P. Woodruff.
\newblock Sketching as a tool for numerical linear algebra.
\newblock \emph{{Foundations and Trends{\textregistered} in Theoretical
  Computer Science}}, 10\penalty0 (1--2):\penalty0 1--157, 2014.

\bibitem[Francis and Raimond(2018)]{Francis2018}
D.~P. Francis and K.~Raimond.
\newblock A practical streaming approximate matrix multiplication algorithm.
\newblock \emph{{Journal of King Saud University-Computer and Information
  Sciences}}, 2018.

\bibitem[Sarlos(2006)]{Sarlos2006}
T.~Sarlos.
\newblock Improved approximation algorithms for large matrices via random
  projections.
\newblock In \emph{{47th Annual IEEE Symposium on Foundations of Computer
  Science (FOCS)}}, pages 143--152. IEEE, 2006.

\bibitem[Clarkson and Woodruff(2009)]{Clarkson2009}
K.~L. Clarkson and D.~P. Woodruff.
\newblock Numerical linear algebra in the streaming model.
\newblock In \emph{{Proceedings of the 41st Annual ACM Symposium on Theory of
  Computing}}, pages 205--214. ACM, 2009.

\bibitem[Kyrillidis et~al.(2014)Kyrillidis, Vlachos, and
  Zouzias]{Kyrillidis2014}
A.~Kyrillidis, M.~Vlachos, and A.~Zouzias.
\newblock Approximate matrix multiplication with application to linear
  embeddings.
\newblock In \emph{{IEEE International Symposium on Information Theory
  (ISIT)}}, pages 2182--2186. {IEEE}, 2014.

\bibitem[Liberty(2013)]{Liberty2013}
E.~Liberty.
\newblock Simple and deterministic matrix sketching.
\newblock In \emph{{Proceedings of the 19th ACM SIGKDD International Conference
  on Knowledge Discovery and Data Mining}}, pages 581--588. ACM, 2013.

\bibitem[Page(1993)]{Page1993}
D.~N. Page.
\newblock Average entropy of a subsystem.
\newblock \emph{{Physical Review Letters}}, 71\penalty0 (9):\penalty0
  1291--1294, 1993.

\bibitem[Sen(1996)]{Sen1996}
S.~Sen.
\newblock Average entropy of a quantum subsystem.
\newblock \emph{{Physical Review Letters}}, 77\penalty0 (1):\penalty0 1--3,
  1996.

\bibitem[Koetter and M{\'e}dard(2003)]{Koetter2003}
R.~Koetter and M.~M{\'e}dard.
\newblock An algebraic approach to network coding.
\newblock \emph{{IEEE/ACM Transactions on Networking (TON)}}, 11\penalty0
  (5):\penalty0 782--795, 2003.

\bibitem[Teschendorff et~al.(2014)Teschendorff, Sollich, and
  Kuehn]{Teschendorff2014}
A.~E. Teschendorff, P.~Sollich, and R.~Kuehn.
\newblock Signalling entropy: {A} novel network-theoretical framework for
  systems analysis and interpretation of functional omic data.
\newblock \emph{Methods}, 67\penalty0 (3):\penalty0 282--293, 2014.

\bibitem[Ahmed and Gokhale(1989)]{Ahmed1989}
N.~A. Ahmed and D.~V. Gokhale.
\newblock Entropy expressions and their estimators for multivariate
  distributions.
\newblock \emph{{IEEE Transactions on Information Theory}}, 35\penalty0
  (3):\penalty0 688--692, 1989.

\bibitem[Guerrero-Cusumano(1996)]{guerrero1996measure}
J.-L. Guerrero-Cusumano.
\newblock A measure of total variability for the multivariate $t$ distribution
  with applications to finance.
\newblock \emph{{Information Sciences}}, 92\penalty0 (1--4):\penalty0 47--63,
  1996.

\bibitem[Nadarajah and Kotz(2005)]{Nadarajah2005}
S.~Nadarajah and S.~Kotz.
\newblock Mathematical properties of the multivariate $t$ distribution.
\newblock \emph{Acta Applicandae Mathematica}, 89\penalty0 (1--3):\penalty0
  53--84, 2005.

\bibitem[Ledoux(2001)]{Ledoux2001}
M.~Ledoux.
\newblock \emph{{The Concentration of Measure Phenomenon}}.
\newblock American Mathematical Society, Providence, RI, USA, 2001.

\bibitem[Chatterjee and Ledoux(2009)]{Chatterjee2009}
S.~Chatterjee and M.~Ledoux.
\newblock An observation about submatrices.
\newblock \emph{{Electronic Communications in Probability}}, 14\penalty0
  (48):\penalty0 495--500, 2009.

\bibitem[Hwang(2004)]{Hwang2004}
S.-G. Hwang.
\newblock Cauchy's interlace theorem for eigenvalues of {H}ermitian matrices.
\newblock \emph{{The American Mathematical Monthly}}, 111\penalty0
  (2):\penalty0 157--159, 2004.

\bibitem[Diaconis and Shahshahani(1981)]{Diaconis1981}
P.~Diaconis and M.~Shahshahani.
\newblock Generating a random permutation with random transpositions.
\newblock \emph{Zeitschrift f{\"u}r {W}ahrscheinlichkeitstheorie und verwandte
  {G}ebiete}, 57\penalty0 (2):\penalty0 159--179, 1981.

\bibitem[Popoviciu(1935)]{Popoviciu1935}
T.~Popoviciu.
\newblock Sur les \'equations alg\'ebriques ayant toutes leurs racines
  r\'eelles.
\newblock \emph{{Mathematica}}, 9:\penalty0 129--145, 1935.

\bibitem[Sharma et~al.(2010)Sharma, Gupta, and Kapoor]{Sharma2010}
R.~Sharma, M.~Gupta, and G.~Kapoor.
\newblock Some better bounds on the variance with applications.
\newblock \emph{{Journal of Mathematical Inequalities}}, 4\penalty0
  (3):\penalty0 355--363, 2010.

\bibitem[Marques()]{stackPopoviciu}
P.~C. Marques.
\newblock Variance of a bounded random variable.
\newblock Cross Validated.
\newblock URL \url{https://stats.stackexchange.com/q/50552}.
\newblock (version: 2016-04-23).

\bibitem[Stewart(1980)]{Stewart1980}
G.~W. Stewart.
\newblock The efficient generation of random orthogonal matrices with an
  application to condition estimators.
\newblock \emph{{SIAM Journal on Numerical Analysis}}, 17\penalty0
  (3):\penalty0 403--409, 1980.

\bibitem[Blitzstein and Hwang(2014)]{Blitzstein2014}
J.~K. Blitzstein and J.~Hwang.
\newblock \emph{{Introduction to Probability}}.
\newblock CRC Press, Boca Raton, FL, USA, 2014.

\bibitem[Goodman(1963)]{Goodman1963}
N.~Goodman.
\newblock The distribution of the determinant of a complex wishart distributed
  matrix.
\newblock \emph{{The Annals of Mathematical Statistics}}, 34\penalty0
  (1):\penalty0 178--180, 1963.

\bibitem[Katzav and Castillo(2010)]{Katzav2010}
E.~Katzav and I.~P. Castillo.
\newblock Large deviations of the smallest eigenvalue of the
  {W}ishart-{L}aguerre ensemble.
\newblock \emph{{Physical Review E}}, 82\penalty0 (4):\penalty0 040104, 2010.

\bibitem[Majumdar(2011)]{Majumdar2010}
S.~N. Majumdar.
\newblock Extreme eigenvalues of {W}ishart matrices: {A}pplication to entangled
  bipartite system.
\newblock In G.~Akemann, J.~Baik, and P.~Di~Francesco, editors, \emph{{The
  Oxford Handbook of Random Matrix Theory}}. Oxford University Press, Oxford,
  United Kingdom, 2011.

\end{thebibliography}

\end{document}